\documentclass[10pt]{amsart}
\usepackage{amssymb,amstext,amsmath,amscd,amsthm,amsfonts}
\usepackage{enumitem,graphicx,latexsym,stmaryrd,multicol,geometry,mathrsfs, mathtools}
\usepackage[normalem]{ulem}
\usepackage[usenames]{color}
\usepackage[all]{xy}
\newtheoremstyle{mystyle}
    {}
    {}
    {\itshape}
    {}
    {\bfseries}
    {.}
    { }
    {\thmname{#1}\thmnumber{ #2}\thmnote{ #3}}
\theoremstyle{mystyle}
\newtheorem{thm}{Theorem}[section]
\newtheorem{lem}[thm]{Lemma}
\newtheorem{prop}[thm]{Proposition}
\newtheorem{cor}[thm]{Corollary}
\theoremstyle{definition}
\newtheorem{dfn}[thm]{Definition}
\newtheorem{ques}[thm]{Question}

\newtheorem{rem}[thm]{Remark}
\newtheorem{conv}[thm]{Convention}

\newtheorem{ex}[thm]{Example}
\newtheorem{nota}[thm]{Notation}
\theoremstyle{remark}

\newtheorem*{claim*}{Claim}

\numberwithin{equation}{thm}
\def\A{\mathscr{A}}

\def\add{\operatorname{add}}

\def\C{\mathscr{C}}

\def\cm{\operatorname{CM}}
\def\cx{\operatorname{cx}}

\def\depth{\operatorname{depth}}
\def\dim{\operatorname{dim}}
\def\E{\mathscr{E}}

\def\Ext{\operatorname{Ext}}

\def\fpd{\operatorname{fpd}}
\def\ge{\geqslant}

\def\Hom{\operatorname{Hom}}

\def\ker{\operatorname{Ker}}
\def\le{\leqslant}

\def\m{\mathfrak{m}}

\def\Mod{\operatorname{Mod}}
\def\mod{\operatorname{mod}}

\def\pd{\operatorname{pd}}
\def\proj{\operatorname{proj}}

\def\radius{\operatorname{radius}}
\def\rap{\operatorname{rap}}
\def\res{\operatorname{res}}
\def\resp{\textit{resp}}

\def\size{\operatorname{size}}

\def\syz{\Omega}

\def\tor{\operatorname{Tor}}

\def\X{\mathscr{X}}
\def\Y{\mathscr{Y}}
\def\Z{\mathscr{Z}}
\def\ZZ{\mathbb{Z}}
\begin{document}
\allowdisplaybreaks
\title[Contravariantly infinite resolving subcategories]{Contravariantly infinite resolving subcategories}
\author{Gen Tanigawa}
\address{Graduate School of Mathematics, Nagoya University, Furocho, Chikusaku, Nagoya 464-8602, Japan}
\email{tanigawa.gen.y4@s.mail.nagoya-u.ac.jp}
\begin{abstract}
Let $R$ be a commutative Noetherian ring. Denote by $\mod R$ the category of finitely generated $R$-modules. In this paper, a contravariantly infinite subcategory of $\mod R$ is defined as a full subcategory $\X$ of $\mod R$ such that no module outside $\X$ admits a right $\X$-approximation. This paper provides several criteria for contravariant infiniteness in the case where $R$ is a local complete intersection.
\end{abstract}
\maketitle
\section{Introduction}

The notion of contravariantly finite subcategories was introduced systematically by Auslander and Smal\o  \ \cite{AS80} for artin algebras to study subcategories admitting almost split sequences. Auslander and Reiten \cite{AR91} applied it to tilting theory over an artin algebra, and this application remains an active topic of research, see \cite{AIR14,I07,IY08}. For commutative rings, the notion of contravariantly finite subcategories was initially discussed in Cohen--Macaulay approximation theory due to Auslander and Buchweitz \cite{ABu89}. Let $R$ be a Cohen--Macaulay local ring with a canonical module. Denote by $\mod R$ the category of finitely generated $R$-modules, and by $\cm R$ the full subcategory of $\mod R$ consisting of maximal Cohen--Macaulay $R$-modules. Auslander and Buchweitz proved that $\cm R$ is contravariantly finite. The notion of resolving subcategories was introduced by Auslander and Bridger \cite{ABr69} to show that the totally reflexive modules form a resolving subcategory. Takahashi \cite{T11} proved that each complete Gorenstein local ring $R$ possesses only three contravariantly finite resolving subcategories: $\mod R$, $\cm R$, and $\add R$, the full subcategory of free $R$-modules.
The purpose of this paper is to introduce and study the notion of the opposite notion of contravariantly finite subcategories, which we call contravariantly infinite subcategories. Namely, we say that a full subcategory $\X$ of an additive category $\C$ is {\em contravariantly infinite} if no object of $\C$ outside $\X$ admits a right $\X$-approximation. This notion was implicitly studied by Takahashi \cite{T21}. He proved that if $R$ is an AB ring in the sense of Huneke and Jorgensen \cite{HJ03} and $\X$ is a resolving subcategory of $\mod R$ containing a module which is not maximal Cohen--Macaulay, then $\X$ is contravariantly infinite.

Dao and Takahashi \cite{DT14} introduced the notion of a radius for full subcategories of $\mod R$ and showed that over complete local complete intersection rings with perfect coefficient field, a full subcategory $\X$ of $\mod R$ has finite radius if and only if any module in $\X$ is maximal Cohen--Macaulay. Dey, Lank and Takahashi \cite{DLT25} proved that the condition that $R$ is complete with perfect coefficient field can be replaced with the condition that $R$ is quasi-excellent. Combining these results, we shall prove the following theorem, which is the main result of this paper.

\begin{thm}\label{main}
Let $R$ be a henselian local complete intersection ring with positive dimension. Let $\X$ be a resolving subcategory of $\mod{R}$. Then the following are equivalent.
\begin{enumerate}[label=\textnormal{(\arabic*)}]
\item
$\X$ is contravariantly infinite.
\item
$\X$ contains a module of finite and positive projective dimension.
\item
$\X$ contains a module which is not maximal Cohen--Macaulay.
\end{enumerate}
\noindent
If in addition $R$ is quasi-excellent, then the above three conditions are equivalent to the following condition as well.
\begin{enumerate}[resume, label=\textnormal{(\arabic*)}]
\item
$\X$ has infinite radius.
\end{enumerate}
\end{thm}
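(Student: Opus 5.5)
We prove the cycle $(1)\Rightarrow(3)\Rightarrow(2)\Rightarrow(1)$, and then attach $(4)$ through the radius results of Dao--Takahashi and Dey--Lank--Takahashi.

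\emph{$(1)\Rightarrow(3)$.} We argue by contraposition. If every module in $\X$ is maximal Cohen--Macaulay, then $\X\subseteq\cm R$. Since $R$ has positive dimension, the residue field $k$ is not maximal Cohen--Macaulay, so $k\notin\X$. It then suffices to show that $k$ (or some other module outside $\X$) has a right $\X$-approximation. Over a Gorenstein ring, $\cm R$ is contravariantly finite, but $\X$ itself may not be. Instead we use the map $R\to k$. Any map $X\to k$ with $X\in\X$ factors through $X/\m X$, which is a $k$-vector space, and then through a surjection from a free module. In fact every map from any module to $k$ factors through $R^{n}\to k$ whenever it is nonzero: choose a preimage of $1$. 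Hence $R\to k$ is a right $\X$-approximation (using $R\in\X$), so $\X$ is not contravariantly infinite.

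\emph{$(3)\Rightarrow(2)$.} Take $M\in\X$ with $\depth M<d=\dim R$. Resolving subcategories are closed under syzygies. The standard trick for a local complete intersection, following Dao--Takahashi's "resolving closure" results, is to produce a module of finite projective dimension in $\X$. One candidate is the Koszul-type construction $M\otimes K(\mathbf x)$ built from extensions of $M$ by $M$ along a regular sequence. I would use the fact that for a resolving $\X$ containing $M$, the module $R/(\mathbf x)$ for a suitable $M$-regular sequence of length $\depth M$ obtained inside $\X$ leads to modules $N\in\X$ with $\pd N=d-\depth M$-type values. The concrete tool is Takahashi's result that over a complete intersection, the resolving closure of $M$ contains all modules $N$ with $\depth N\ge \depth M$ that are locally in the closure on the punctured spectrum, or at least contains some module of projective dimension $d-\depth M>0$. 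I would cite or reprove: $\X\ni M$ implies $\X$ contains $R/(x_1,\dots,x_t)$ for a regular sequence with $t=d-\depth M$. This is the main obstacle; I expect to do it by induction on $t$, with complexity considerations (finite CI dimension) controlling the Ext vanishing needed to split off free parts.

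\emph{$(2)\Rightarrow(1)$.} Let $N\in\X$ with $0<\pd N<\infty$, and let $Y\notin\X$ have a right $\X$-approximation $f\colon X\to Y$. Since $\X$ contains $R$, $f$ is surjective. The kernel $K$ satisfies $\Ext^1(X',K)=0$ for $X'\in\X$ only if we pass to a minimal approximation, which exists because $R$ is henselian (Krull--Schmidt). The plan is to adapt Takahashi's AB-ring argument. Using $N$ and its syzygies, which all lie in $\X$, together with pushouts, we build from $f$ and an element of $\Ext^1(N,K)$ modules in $\X$ mapping to $Y$ that cannot factor through $X$. Concretely, we consider $\Ext^{i}(N,-)$, which is nonzero at $i=\pd N$ on every nonzero module by Auslander--Buchsbaum and Nakayama. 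Replacing $N$ by $\syz^{-}$-type extensions, we force $\Ext^{>0}(\X,K)=0$, contradicting $\Ext^{\pd N}(N,K)\ne0$ unless $K=0$. If $K=0$, then $Y\cong X\in\X$, a contradiction.

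\emph{$(3)\Leftrightarrow(4)$.} For $R$ quasi-excellent, Dey--Lank--Takahashi show that $\X$ has finite radius if and only if $\X\subseteq\cm R$, which gives this equivalence directly.
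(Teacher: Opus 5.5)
Your cycle $(1)\Rightarrow(3)\Rightarrow(2)\Rightarrow(1)$ follows the same logic as the paper (Lemma~\ref{3.3}, Theorem~\ref{3}, Lemma~\ref{3.2}). However, two of the three links are not established.

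\textbf{$(1)\Rightarrow(3)$ rests on a false claim.} The map $R\to k$ (or $R^{n}\to k$) is not a right $\X$-approximation in general.
\begin{itemize}
\item To factor $g\colon X\to k$ through $R^{n}\to k$ you need a map $X\to R^{n}$, i.e.\ a lifting property of $X$. ``Choosing a preimage of $1$'' gives a map out of $R$, not into it.
\item The identity of $k$ already refutes your general claim.
\item It also fails inside $\cm R$. Suppose $X\in\X$ has no nonzero free summand (e.g.\ $\X=\cm R$ with $R$ singular). Then every homomorphism $X\to R^{n}$ has image in $\m R^{n}$, since otherwise some coordinate would be a split surjection onto $R$. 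Hence every composite $X\to R^{n}\to k$ vanishes, although $\Hom_R(X,k)\neq0$.
\end{itemize}
Notice that your argument never uses that $R$ is Gorenstein, which is exactly the input needed. The correct witness is $R/(x)$ for an $R$-regular $x\in\m$. It has depth $d-1$, so it is not in $\X$. Since $\Ext^1_R(X,R)=0$ for every maximal Cohen--Macaulay $X$ over a Gorenstein ring, every map $X\to R/(x)$ lifts along $R\to R/(x)$. So this map is a right $\X$-approximation.

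\textbf{$(3)\Rightarrow(2)$ is not proved.} This is where the complete intersection hypothesis does its real work. You only formulate a hoped-for statement (that $\X$ contains some $R/(x_1,\dots,x_t)$) with an unspecified induction on $t$.

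The missing idea is the Dao--Takahashi correspondence (Theorem~\ref{3}): over a complete intersection, $\X=\res\bigl((\X\cap\fpd R)\cup(\X\cap\cm R)\bigr)$. So if $\X\cap\fpd R=\add R$, then $\X=\res(\X\cap\cm R)\subseteq\cm R$, because $\cm R$ is resolving. You do not need a cyclic module, only some module of finite positive projective dimension.

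If you want to avoid citing that theorem, your complexity idea would have to be an induction on complexity, not on $t$. You would produce, inside $\res M$, extensions $0\to M\to K\to\syz^{n}M\to0$ with $\cx K<\cx M$ and $\depth K=\depth M$. Iterating down to complexity $0$, Auslander--Buchsbaum then gives $\pd K=d-\depth M>0$. As written, neither route is carried out.

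Finally, $(2)\Rightarrow(1)$ is essentially Lemma~\ref{3.2}, but the middle step should be stated precisely:
\begin{itemize}
\item a minimal right approximation exists because $R$ is henselian;
\item Wakamatsu's lemma gives $\Ext^1_R(\X,K)=0$;
\item closure of $\X$ under syzygies upgrades this to $\Ext^{>0}_R(\X,K)=0$ (this is Lemma~\ref{1}).
\end{itemize}
The ``$\syz^{-}$-type extensions'' and pushouts play no role. The nonvanishing $\Ext^{\pd N}_R(N,K)\neq0$ for $K\neq0$ is just Nakayama applied to the last map of a minimal free resolution of $N$.

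For $(3)\Leftrightarrow(4)$, the two directions come from Theorem~\ref{4} and Remark~\ref{6} respectively (plus monotonicity of radius), not from Dey--Lank--Takahashi alone.
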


This theorem is proved in Section 3, followed by giving basic notions and their fundamental properties in Section 2. Now it is natural to ask the following two questions. 

\begin{enumerate}
\item[(i)]
Can one remove from Theorem \ref{main} the assumption that $R$ has positive dimension?
\item[(ii)]
Can one extend Theorem \ref{main} to the Gorenstein case?
\end{enumerate}

As to (i), we get a complete answer, that is, the assumption of positive dimension is indispensable; see Remark \ref{rem} and Example \ref{ex}. As for (ii), we deeply discuss it in Section 4, introducing the subcategory of $\mod R$ consisting of modules $M$ such that the functor $\Hom_R(-,M)|_\X$ is finitely presented in the functor category of $\X$. We do not get a complete answer to (ii) itself, but instead obtain another result on complete intersections. 

\begin{thm}
Let $R$ be a henselian local complete intersection. Let $\X$ be a resolving subcategory of $\mod R$ all of whose objects are maximal Cohen--Macaulay. Let $M$ be a finitely generated $R$-module. If the functor $\Hom_R(-,M)|_\X$ is finitely generated in the functor category of $\X$, then it is finitely presented. 
\end{thm}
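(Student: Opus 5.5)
The plan is to reduce first to the case where $M$ is maximal Cohen--Macaulay, and then to work in the stable category of $\cm R$.

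\emph{Reduction to $M$ maximal Cohen--Macaulay.} Since $R$ is Gorenstein, $M$ has a Cohen--Macaulay approximation $0\to Y\to C\to M\to 0$ (Auslander--Buchweitz), with $C\in\cm R$ and $\pd_R Y<\infty$. For every $X\in\X\subseteq\cm R$ we have $\Ext^1_R(X,Y)=0$. This gives an exact sequence of functors on $\X$:
\[
0\to\Hom_R(-,Y)|_\X\to\Hom_R(-,C)|_\X\to\Hom_R(-,M)|_\X\to 0 .
\]
Since $\pd Y<\infty$, every map from a maximal Cohen--Macaulay module to $Y$ factors through a free module. Choosing a free cover $R^n\to Y$, the functor $\Hom_R(-,Y)|_\X$ is therefore a quotient of $\Hom_R(-,R^n)|_\X$, and $R^n\in\X$. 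Hence $\Hom_R(-,Y)|_\X$ is finitely generated. The standard two-out-of-three properties of finitely generated and finitely presented functors then show two things. First, $\Hom_R(-,M)|_\X$ is finitely generated if and only if $\Hom_R(-,C)|_\X$ is. Second, $\Hom_R(-,M)|_\X$ is finitely presented if $\Hom_R(-,C)|_\X$ is. So we may assume $M\in\cm R$.

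\emph{The maximal Cohen--Macaulay case.} Let $f\colon X_0\to M$ be a right $\X$-approximation. After adding a free summand we may take $f$ surjective, so $K=\ker f$ is maximal Cohen--Macaulay. Finite presentation of $\Hom_R(-,M)|_\X$ amounts to showing that $K$ admits a right $\X$-approximation. We pass to the stable category $\underline{\cm}\,R$, which is triangulated since $R$ is Gorenstein; maps factoring through free modules are harmless because $R\in\X$. There $K\to X_0\to M\to \Sigma K$ is a triangle.

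The key input I would use is that over a local complete intersection, a resolving subcategory contained in $\cm R$ is also closed under cosyzygies. This comes from Takahashi's classification of resolving subcategories via support varieties (or, in the hypersurface case, directly from periodicity). Consequently $\underline{\X}$ is a thick subcategory of $\underline{\cm}\,R$. Then $\Sigma^{-1}$ applied to a right approximation of $\Sigma M$ gives a right approximation of $\Sigma^{-1}M$; more precisely, $\Hom(-,\Sigma^{-i}M)|_\X\cong\Hom(\Sigma^{i}-,M)|_\X$, and this is finitely generated because $\Sigma^i\X=\X$. The same holds for $X_0$.

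\emph{Main obstacle.} The remaining and hardest step is to deduce finite generation of $\Hom(-,K)|_\X$ from the long exact sequence
\[
\Hom(-,\Sigma^{-1}X_0)\to\Hom(-,\Sigma^{-1}M)\to\Hom(-,K)\to\Hom(-,X_0)\to\Hom(-,M)\to 0
\]
on $\X$. Its image in $\Hom(-,X_0)$ is exactly the kernel whose finite generation is in question, so the argument is circular as it stands.

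To break the circularity I would try two routes. The first is to use that $\underline{\X}$ is triangulated, hence has weak kernels, so that finitely presented functors on $\X$ form an abelian category. One would then build the approximation of $K$ directly: take a right approximation $X_1\to\Sigma^{-1}M$, and form the homotopy pullback of $X_0\to M$ along a suitable map, so as to produce an object of $\X$ mapping to $K$ with the lifting property. The second route, if that fails, is to invoke the finite-radius result of Dao--Takahashi and Dey--Lank--Takahashi quoted in the introduction. That result writes $\X$ as built from a single generator $G$ in finitely many steps. One would then check that right approximability of $K$ reduces to finite generation of $\Hom_R(G,K)$-type data, which holds over the ring $\underline{\operatorname{End}}(G)$.
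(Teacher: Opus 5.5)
Your reduction to $M\in\cm R$ is correct, and you have the right structural input: closure of $\X$ under cosyzygies, \cite[Corollary 4.16]{DT14}, which the paper also uses. But the argument stops at the one step that carries the content. You never show that $\Hom_R(-,K)|_\X$ is finitely generated, and neither proposed route does it.

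\textbf{Route 1 fails.} Since $\Sigma\underline{\X}=\underline{\X}$, the map $\underline{\Hom}(-,\Sigma^{-1}X_0)\to\underline{\Hom}(-,\Sigma^{-1}M)$ is already surjective on $\underline{\X}$. So the connecting map into $\underline{\Hom}(-,K)$ is zero, and an approximation of $\Sigma^{-1}M$ contributes nothing. Thus $\underline{\Hom}(-,K)|_{\underline{\X}}$ is exactly the kernel of $\underline{\Hom}(-,X_0)\to\underline{\Hom}(-,M)$. The fact that finitely presented functors on $\underline{\X}$ form an abelian category says nothing about that kernel.

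\textbf{Route 2 fails as described.} $\underline{\Hom}(G,K)$ is finitely generated over $\underline{\operatorname{End}}(G)$ for \emph{every} $K$. So such a reduction would make every resolving $\X\subseteq\cm R$ contravariantly finite. That contradicts \cite{T11} whenever a resolving subcategory lies strictly between $\add R$ and $\cm R$. Finite radius also needs quasi-excellence, which is not assumed. Note too that you never use that $R$ is henselian.

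\textbf{The missing idea} is to choose the approximation minimally. Since $R$ is henselian, Lemma~\ref{1} (\cite[Lemma 14]{T21}, a Wakamatsu-type statement) gives an exact sequence $0\to Y\to X_0\to M\to 0$ with $X_0\in\X$ and $Y\in\X^\perp$. Cosyzygy closure then makes $\X^\perp$ closed under syzygies. To see this, fix $X\in\X$ and take exact sequences $0\to X\to F\to X'\to 0$ with $F$ free and $X'\in\X$, and $0\to\syz Y\to P\to Y\to 0$ with $P$ free. Because $\Ext^{>0}_R(X,R)=0$, you get:
\begin{itemize}
\item $\Ext^i_R(X,\syz Y)\cong\Ext^{i-1}_R(X,Y)=0$ for $i\ge 2$;
\item $\Ext^1_R(X,\syz Y)\cong\Ext^2_R(X',\syz Y)\cong\Ext^1_R(X',Y)=0$.
\end{itemize}
Hence $P\to Y$ is a right $\X$-approximation. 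It follows that
\[
\Hom_R(-,P)|_\X\to\Hom_R(-,X_0)|_\X\to\Hom_R(-,M)|_\X\to 0
\]
is a finite presentation. This is the paper's route through Proposition~\ref{aaa} and Theorem~\ref{bbb}, and it needs no reduction to $\cm R$.

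In your stable-category language the same fix reads as follows. For the minimal approximation, Wakamatsu's lemma gives $\underline{\Hom}(\underline{\X},\Sigma K)=0$. Since $\Sigma^{-1}\underline{\X}=\underline{\X}$, this means $\underline{\Hom}(\underline{\X},K)=0$. So every map from $\X$ to $K$ factors through a free module, and the free cover of $K$ is already a right $\X$-approximation.
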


\section{Basic definitions and properties}

This section is devoted to stating several notations and properties for later use. We begin with the convention.

\begin{conv}
Throughout this paper, all rings are commutative Noetherian with identity, all modules are finitely generated, and all subcategories are strictly full. For an $R$-module $M$, we denote by $\pd M$ the projective dimension of $M$. We let

\begin{center}
\begin{tabular}{@{}r l@{}}
$(R,\m,k)$ & be a (commutative Noetherian) local ring, \\
$\mod{R}$  & the category of (finitely generated) $R$-modules, \\
$\cm R$   & the (full) subcategory of $\mod{R}$ consisting of maximal Cohen--Macaulay $R$-modules, and \\
$\fpd{R}$ & the (full) subcategory of $\mod{R}$ consisting of $R$-modules of finite projective dimension.
\end{tabular}
\end{center}
For simplicity, subscripts are omitted when no confusion arises.
\end{conv} 

We recall several precise definitions about closedness of subcategories. 

\begin{dfn}
Let $\A$ be an additive category and $\X$ a subcategory of $\A$.
\begin{enumerate}[label=\textrm{(\arabic*)}]
\item
We say that $\X$ is {\em closed under finite direct sums} if for a finite number of objects $X_1,\dots,X_n \in \X$, the direct sum $X_1 \oplus \cdots \oplus X_n$ belongs to $\X$.
\item
We say that $\X$ is {\em closed under direct summands} provided that if $A$ and $B$ are objects in $\A$ with $A\oplus B\in\X$, then both $A$ and $B$ belong to $\X$.
\end{enumerate}
\end{dfn}

We denote by $\add_{\A}{\X}$ the additive closure of $\X$, that is, the smallest subcategory of $\A$ that contains $\X$ and is closed under finite direct sums and direct summands. When $\X$ consists of a single module $X$, we simply write $\add_{\A}X$.

\begin{dfn}
Let $\A$ be an abelian category and $\X$ a subcategory of $\A$.
\begin{enumerate}[label=\textrm{(\arabic*)}]
\item
We say that $\X$ is {\em closed under subobjects} ({\em resp. closed under quotient objects}) if $A \in \X$ for all exact sequences $0 \to A\to B$ (\resp. $0 \gets A\gets B$) in $\A$ with $B \in \X$.
\item
We say that $\X$ is {\em closed under kernels} ({\em resp. closed under cokernels}) if $A \in \X$ for all exact sequences $0 \to A\to B\to C$ (\resp. $0 \gets A\gets B\gets C$) in $\A$ with $B,C \in \X$.
\item
We say that $\X$ is {\em closed under kernels of epimorphisms} ({\em resp. closed under cokernels of monomorphisms}) if $A \in \X$ for all exact sequences $0 \to A\to B\to C\to 0$ (\resp. $0 \gets A\gets B\gets C\gets 0$) in $\A$ with $B,C \in \X$.
\item
We say that $\X$ is {\em closed under extensions} if $B \in \X$ for all exact sequences $0 \to A\to B\to C\to 0$ in $\A$ with $A,C \in \X$.
\end{enumerate}
\end{dfn}

\begin{dfn}
Let $\A$ be an abelian category with enough projective objects.
\begin{enumerate}[label=\textrm{(\arabic*)}]
\item We denote by $\proj{\A}$ the subcategory of $\A$ consisting of projective objects.
\item Let $M$ be an object of $\A$. For each positive integer $n$, we denote by $\syz^n_{\A}{M}$ the $n$-th {\em syzygy} of $M$, that is, the object $N$ of $\A$ that appears in an exact sequence $0 \to N\to P_{n-1} \to \cdots \to P_0\to M\to 0$ in $\A$ with $P_i \in \proj{\A}$ for all $0 \leq i \leq n-1$. The object $\syz^n_{\A}{M}$ is uniquely determined up to projective summands. We set $\syz^0_{\A}M=M$ and $\syz_{\A}M = \syz^1_{\A}M$. For each $n\ge0$ we denote by $\syz^n \A$ the subcategory of $\A$ consisting of $n$-th syzygies.
\item We say that a subcategory $\X$ of $\A$ is {\em closed under syzygies} if $\X$ contains $\proj\A$ and $\syz_{\A}{X} \in \X$ for all $X \in \X$. 
\end{enumerate}
\end{dfn}

\begin{rem}
Let $\A$ be an abelian category and $\X$ a subcategory of $\A$. Consider the conditions that $\X$ is closed under
\begin{enumerate}[label=\textrm{(\arabic*)}]
\item subobjects,
\item kernels,
\item kernels of epimorphisms,
\item quotient objects,
\item cokernels,
\item cokernels of monomorphisms,
\item direct summands and
\item extensions.
\end{enumerate}
\noindent
Then the implications $(1) \Rightarrow (2) \Rightarrow (3)$, $(4) \Rightarrow (5) \Rightarrow (6)$ and  $(2) \Rightarrow (7) \Leftarrow (5)$ hold. In addition, suppose that $\A$ has enough projective objects. Consider the condition that $\X$ is closed under
\begin{enumerate}[resume]
\item syzygies.
\end{enumerate}
\noindent
If $\X$ contains $\proj\A$ and satisfies (7) and (8), then (3) and (9) are equivalent.
\end{rem}

Now, we are ready to recall the definition of a resolving subcategory, which was introduced by Auslander and Bridger \cite{ABr69}. In most of the situations in this paper, we deal with resolving subcategories of $\mod R$.
\begin{dfn}
Let $\A$ be an abelian category with enough projective objects. A subcategory $\X$ of $\A$ is called {\em resolving} if $\X$ contains $\proj{\A}$ and is closed under direct summands, extensions and kernels of epimorphisms. We denote by $\res_{\A}{\X}$ the resolving closure of $\X$, that is, the smallest resolving subcategory of $\A$ that contains $\X$.
\end{dfn}

We recall the definitions of a right approximation and a contravariantly finite subcategory, which were introduced by Auslander and Smal\o\ \cite{AS80}. Takahashi \cite{T11} proved that all contravariantly finite resolving subcategories of $\mod R$ over a henselian Gorenstein local ring $R$ are $\add R$, $\cm R$ and $\mod R$.

\begin{dfn}
Let $\A$ be an additive category and $\X$ a subcategory of $\A$.
\begin{enumerate}[label=\textrm{(\arabic*)}]
\item
We say that a morphism $f:X \to M\ (\resp.\ f:X \leftarrow M)$ in $\A$ with $X \in \X$ is a {\em right} (\resp. {\em left}) {\em $\X$-approximation} (of $\A$) if for every morphism $f':X' \to M\ (\resp.\ f':X' \leftarrow M)$ with $X' \in \X$ there exists a morphism $g:X' \to X\ (\resp.\ g:X' \leftarrow X)$ such that $f' = fg\ (\resp.\ f' = gf)$.
\item
We denote by $\rap_{\A}{\X}$ the subcategory of $\A$ consisting of objects that admit right $\X$-approximations. For each object $X \in \X$ the identity morphism of $X$ is a right $\X$-approximation, so one has
$$\X \subseteq \rap{\X} \subseteq \A.$$
\item
We say that $\X$ is {\em contravariantly finite} if every object in $\A$ admits a right $\X$-approximation, that is, if the equality $\rap{\X} = \A$ holds.
\end{enumerate}
\end{dfn}

For a resolving subcategory $\X$ of $\mod R$, the subcategory $\rap\X$ of $\mod R$ is closed under direct summands and extensions, see \cite[Proposition 23(1)]{T21}. We freely use this fact.

For a subcategory $\X$ of $\mod{R}$, we denote by $\X^{\bot}$ (\resp. $^{\bot}\X$) the subcategory of $\mod{R}$ consisting of $R$-modules $M$ such that $\Ext^{>0}_R{(X,M)}=0$ (\resp. $\Ext^{>0}_R{(M,X)}=0$) for all $X \in \X$. When $\X$ consists of a single module $X$, we simply write $X^{\bot}$ (\resp. $^{\bot}X$). Let $\Y$ be a subcategory of $\mod R$ such that $\Y\subseteq\X$. Then it is straightforward that $\X^{\bot}$ (\resp. $^{\bot}\X$) is contained in $\Y^{\bot}$ (\resp. $^{\bot}\Y$). The following lemma is used to ensure that there exists some nonzero $R$-module in $\X^{\bot}$ if $\X \neq \rap{\X}$.

\begin{lem}[{\cite[Lemma 14]{T21}}]\label{1}
Let $R$ be a henselian local ring and $\X$ a resolving subcategory of $\mod{R}$. Then an $R$-module $M$ belongs to $\rap \X$ if and only if there exists an exact sequence $0 \to Y\to X\to M\to 0$ of $R$-modules with $X \in \X$ and $Y \in \X^\bot$. 
\end{lem}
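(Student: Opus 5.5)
We prove both directions. The plan is to use the minimal right $\X$-approximation, which exists because $R$ is henselian, together with a Wakamatsu-type argument.

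For the ``if'' direction, suppose there is an exact sequence $0\to Y\xrightarrow{g} X\xrightarrow{f} M\to 0$ with $X\in\X$ and $Y\in\X^\bot$. We show that $f$ is a right $\X$-approximation. Given $f':X'\to M$ with $X'\in\X$, take the pullback along $f$; it is an extension $0\to Y\to E\to X'\to 0$. Its class lies in $\Ext^1_R(X',Y)$, which vanishes because $Y\in\X^\bot$. So the sequence splits, and the splitting gives a map $X'\to E\to X$ lifting $f'$. This direction does not need henselianity.

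For the ``only if'' direction, let $M\in\rap\X$ with right $\X$-approximation $f:X\to M$.

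\begin{enumerate}[label=\textrm{(\roman*)}]
\item \emph{$f$ can be taken surjective.} Since $R\in\X$, choose a surjection $\pi:R^n\to M$. Replace $f$ by $(f,\pi):X\oplus R^n\to M$. This is still a right $\X$-approximation, since $\X$ is closed under finite sums, and it is now surjective.
\item \emph{$f$ can be taken right minimal.} Because $R$ is henselian, $\mod R$ is Krull--Schmidt and endomorphism rings of finitely generated modules are semiperfect. The standard argument then lets us split off a direct summand of $X$ on which $f$ vanishes. The remaining summand lies in $\X$, since $\X$ is closed under summands. Its restriction $f$ is right minimal: every $\phi\in\operatorname{End}(X)$ with $f\phi=f$ is an automorphism. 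Surjectivity is preserved, because the discarded summand maps to zero.
\item \emph{$Y=\ker f$ lies in $\X^\bot$.} Set $Y=\ker f$; this is Wakamatsu's lemma. Let $X'\in\X$ and $\xi\in\Ext^1_R(X',Y)$ be represented by $0\to Y\to E\to X'\to 0$. Form the pushout of $0\to Y\to X\to M\to 0$ along $Y\to E$. This gives $0\to X\to F\to X'\to 0$ with a map $F\to M$ extending $f$. Closure under extensions gives $F\in\X$, so $F\to M$ factors through $f$ via some $h:F\to X$. The composite $X\to F\xrightarrow{h} X$ satisfies $f\circ(\text{composite})=f$, so by minimality it is an automorphism. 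Hence $X\to F$ splits, and chasing the diagram shows that $\xi=0$ (the standard Wakamatsu argument).
\item \emph{Higher Ext groups.} For $i\ge2$, write $\Ext^i_R(X',Y)\cong\Ext^1_R(\syz^{i-1}X',Y)$. Since $\X$ is resolving, it is closed under syzygies, so $\syz^{i-1}X'\in\X$ and the previous step applies.
\end{enumerate}

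Putting these together gives the exact sequence $0\to Y\to X\to M\to0$ with $X\in\X$ and $Y\in\X^\bot$.

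The main obstacle is the existence of a right minimal version of the approximation, which is exactly where the henselian hypothesis is needed. To handle it, I would use that $\operatorname{End}_R(X)$ is module-finite over the henselian ring $R$, hence semiperfect. Then idempotents lift, and the Auslander--Smal\o\ argument for minimal approximations goes through.
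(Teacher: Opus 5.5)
Your proof is correct, and it is the standard argument behind \cite[Lemma 14]{T21}, which the paper cites without giving a proof. The \emph{if} direction is the lifting afforded by $\Ext^1_R(X',Y)=0$; the \emph{only if} direction uses a right minimal $\X$-approximation (which exists because $R$ is henselian), Wakamatsu's lemma for $\Ext^1$, and closure of $\X$ under syzygies for the higher $\Ext$ groups. Step (i) is harmless but unnecessary: since $R\in\X$, every right $\X$-approximation of $M$ is already surjective, because each map $R\to M$ factors through it.
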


Now we recall the definition of the radius of a subcategory of $\mod R$, which was introduced by Dao and Takahashi \cite{DT14}. The radius indicates the least number of extensions necessary to build all the $R$-modules in $\X$ out of a single $R$-module.

\begin{dfn}
Let $\X$ and $\Y$ be subcategories of $\mod R$.
\begin{enumerate}
\item We denote by [$\X$] the additive closure of the subcategory of $\mod R$ consisting of $R$ and $\syz^i X$ for all $i \in \ZZ_{\ge 0}$ and $X \in \X$.
\item We denote by $\X \circ \Y$ the subcategory of $\mod R$ consisting of $R$-modules $M$ which appears in an exact sequence $0 \to X\to M\to Y\to 0$ with $X \in \X$ and $Y \in \Y$. We set $\X \bullet \Y = [[\X]\circ[\Y]]$.
\item We define the {\em ball of radius r centered at} $\X$  as
$$
[\X]_r =
\begin{cases}
[\X] & (r=1),\\
[\X]_{r-1} \bullet \X & (r>1).\\
\end{cases}
$$
When $\X$ consists of a single module $X$, we simply write $[X]_r$ instead of $[\X]_r$, and call it the {\em ball of radius r centered at X}.
\item We define the {\em radius} of $\X$, denoted by $\radius \X$, as the infimum of the integers $n \ge 0$ such that $\X$ is contained in $[C]_{n+1}$ for some $R$-module $C$.
\end{enumerate}
\end{dfn}

Finally, we present several important results which play a key role in the proof of the main theorem. The notion of AB rings was introduced by Huneke and Jorgensen \cite{HJ03}. Let $R$ be a Gorenstein ring. If there exists a nonnegative integer $n$ such that $\Ext^{>n}_R(M,N)=0$ for all $M,N \in \mod R$ satisfying $\Ext^{\gg 0}_R(M,N)=0$, we say that $R$ is an {\em AB} ring. A typical example of an AB ring is a complete intersection; see \cite[Corollary 3.4]{HJ03}. The following theorem gives a sufficient condition for a resolving subcategory to be contravariantly infinite.

\begin{thm}[{\cite[Proposition 29(2)]{T21}}]\label{2}
Let $R$ be a henselian AB ring and $\X$ a resolving subcategory of $\mod{R}$. If $\X$ is not contravariantly infinite, then $\X$ is contained in $\cm R$.
\end{thm}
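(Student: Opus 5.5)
The plan is to argue by contraposition. Suppose $\X$ is not contravariantly infinite, so there is an $R$-module $M\notin\X$ admitting a right $\X$-approximation. Lemma \ref{1} gives an exact sequence $0\to Y\to X_0\to M\to 0$ with $X_0\in\X$ and $Y\in\X^{\bot}$. Then $Y\neq 0$, since otherwise $M\cong X_0\in\X$. So the goal becomes: a resolving subcategory $\X$ over a henselian AB ring with $\X^{\bot}$ containing a nonzero module must lie in $\cm R$. Assume instead that some $X\in\X$ has $t=\depth X<d=\dim R$; we aim for a contradiction with $Y\neq0$.

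The first step is to produce, inside $\X$, a module $P$ with $0<\pd P<\infty$. For this I would use the Gorenstein structure together with the closure properties of $\X$.
\begin{enumerate}
\item Replace $X$ by a high syzygy $\syz^{t}X$, which is still in $\X$.
\item Take a sequence $\mathbf{x}=x_1,\dots,x_{d-t}$ that is regular on $R$ and on the relevant syzygies.
\item Use extensions of copies of syzygies of $X$, and kernels of epimorphisms among them, to build a module of finite projective dimension $d-t$ (a Koszul-type module in the spirit of Dao--Takahashi).
\end{enumerate}
This is the step I expect to be the main obstacle. The delicate part is keeping every intermediate module inside $\X$ while forcing finiteness of projective dimension.

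If that construction resists, I would fall back on the AB hypothesis. Since $\Ext^{>0}_R(X,Y)=0$ for every $X\in\X$, in particular $\Ext^{\gg0}_R(X,Y)=0$. Over an AB ring, the uniform bound on Ext vanishing and the resulting symmetry $\Ext^{\gg0}_R(X,Y)=0\iff\Ext^{\gg0}_R(Y,X)=0$ (Huneke--Jorgensen) should then let me compare depths via the depth formula. This would also force $Y$ to have finite projective dimension, or a non-CM module in $\X$ to be linked to one of finite projective dimension.

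Once $P\in\X$ with $n=\pd P\in(0,\infty)$ is in hand, the contradiction is formal. We have $\Ext^{>0}_R(P,Y)=0$. On the other hand, since $\Ext^{n}_R(P,-)$ is right exact on the top degree, $\Ext^n_R(P,Y)\cong\Ext^n_R(P,R)\otimes_R Y$. This is nonzero by Nakayama's lemma, because $\Ext^n_R(P,R)\neq0$ (as $n=\pd P$) and $Y\neq0$. Hence $\Ext^n_R(P,Y)\neq 0$ with $n>0$, contradicting $Y\in\X^{\bot}$. Therefore no module of $\X$ has depth less than $d$, that is, $\X\subseteq\cm R$.
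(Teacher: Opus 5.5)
Your opening reduction and your closing computation are both correct. Lemma \ref{1} does give $0\neq Y\in\X^{\perp}$, and $\Ext^n_R(P,Y)\cong\Ext^n_R(P,R)\otimes_R Y\neq0$ is right. But the proof stands or falls with Step 2, and that step is a genuine gap.

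Your main plan for Step 2 uses only a non-maximal Cohen--Macaulay module and the closure properties of $\X$. So it is an attempt at the implication (3)$\Rightarrow$(2) of Theorem \ref{main}. The paper obtains that implication only for complete intersections, and only through the Dao--Takahashi classification (Theorem \ref{3}) in the proof of Theorem \ref{3.5}. For a henselian AB ring of positive dimension, Lemmas \ref{3.2} and \ref{3.4} already give every other implication of Theorem \ref{main}, so this is exactly the part left open by the question at the start of Section 4. Your Koszul sketch does not say how finite projective dimension is to be forced while staying inside $\X$. The fallback (Ext symmetry, a depth formula, ``forcing'' $Y$ to have finite projective dimension) is not an argument either.

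The way out is not to look for $P$ inside $\X$ at all, and to use the AB hypothesis on maximal Cohen--Macaulay modules outside $\X$.

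First, if $C\in\cm R$ and $\Ext^{\gg0}_R(C,Y)=0$, then $\Ext^{>0}_R(C,Y)=0$. Since $R$ is Gorenstein, $C$ is an $n$-th syzygy of some $C''\in\cm R$, where $n$ is the AB constant. Then $\Ext^{\gg0}_R(C'',Y)=0$, so the AB property gives $\Ext^{>n}_R(C'',Y)=0$. Hence $\Ext^i_R(C,Y)\cong\Ext^{i+n}_R(C'',Y)=0$ for $i>0$.

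Now suppose $X\in\X$ has $g=\depth R-\depth X>0$. Take an Auslander--Buchweitz hull $0\to X\to Q\to C\to0$ with $C\in\cm R$ and $\pd Q<\infty$.
\begin{enumerate}
\item Since $\Ext^{>0}_R(C,R)=0$, we get $\Ext^i_R(Q,R)\cong\Ext^i_R(X,R)$ for $i>0$. By the Auslander--Bridger formula, $\pd Q=g$.
\item The long exact sequence of $\Ext_R(-,Y)$, together with $Y\in\X^\perp$ and $\pd Q=g$, gives $\Ext^{>g}_R(C,Y)=0$.
\item By the first observation, $\Ext^{>0}_R(C,Y)=0$. So $\Ext^g_R(Q,Y)$ embeds in $\Ext^g_R(X,Y)=0$.
\end{enumerate}
This contradicts $\Ext^g_R(Q,Y)\cong\Ext^g_R(Q,R)\otimes_RY\neq0$.

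So your final Nakayama step is the right one, but it must be applied to the hull $Q$, which need not lie in $\X$. Your passing remark about linking a non-maximal Cohen--Macaulay module of $\X$ to one of finite projective dimension was pointing in this direction.
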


The following theorem is powerful to classify the resolving subcategories of $\mod R$ and is one of the reasons why the main theorem of this paper demands the assumption that $R$ is a complete intersection ring.

\begin{thm}[{\cite[Theorem 7.4]{DT15}}]\label{3}
Let $R$ be a complete intersection ring. There exists a 1-1 correspondence
$$
\begin{Bmatrix}
\text{Resolving}\\
\text{subcategories}\\
\text{of }\mod R
\end{Bmatrix}
\begin{matrix}
 & \overset{\Phi}{\longrightarrow} & \\
 & \underset{\Psi}{\longleftarrow} & 
\end{matrix}
\begin{Bmatrix}
\text{Resolving}\\
\text{subcategories}\\
\text{of }\fpd R
\end{Bmatrix}
\times
\begin{Bmatrix}
\text{Resolving}\\
\text{subcategories}\\
\text{of }\cm R
\end{Bmatrix}
.$$
\noindent
Here, the mutually inverse bijections $\Phi,\Psi$ are defined by $\Phi(\X) = (\X \cap \fpd R,\X \cap \cm R)$ and $\Psi(\Y,\Z) = \res (\Y \cup \Z)$.
\end{thm}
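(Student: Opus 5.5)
The plan is to check that $\Phi$ and $\Psi$ are well defined and then prove $\Psi\Phi=\mathrm{id}$ and $\Phi\Psi=\mathrm{id}$. The main tool is Cohen--Macaulay approximation theory of Auslander and Buchweitz \cite{ABu89}, which applies because a complete intersection is Gorenstein. Over such $R$, every $M\in\mod R$ fits into exact sequences
$$0\to Y_M\to X_M\to M\to 0 \quad\text{and}\quad 0\to M\to Y^M\to X^M\to 0,$$
with $X_M,X^M\in\cm R$ and $Y_M,Y^M\in\fpd R$. We also have $\Ext^{>0}_R(\cm R,\fpd R)=0$. Moreover $X_M$ can be built explicitly from $\syz^d M$, where $d=\dim R$: one takes MCM cosyzygies of $\syz^dM$ (available since $\syz^d M$ is MCM over a Gorenstein ring) and pushes out along a truncation of the free resolution of $M$.

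\textbf{Well-definedness.} Since $\fpd R$ and $\cm R$ are themselves resolving in $\mod R$, the intersections $\X\cap\fpd R$ and $\X\cap\cm R$ are resolving in $\fpd R$ and in $\cm R$ respectively. So $\Phi$ is well defined, and $\Psi$ clearly is too.

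\textbf{$\Psi\Phi=\mathrm{id}$.} The inclusion $\res((\X\cap\fpd)\cup(\X\cap\cm))\subseteq\X$ is trivial. For the reverse, take $M\in\X$. It suffices to show that $X_M\in\X$. Granting this, $X_M$ lies in $\X\cap\cm R$, and then $Y_M=\ker(X_M\to M)\in\X\cap\fpd R$ because $\X$ is closed under kernels of epimorphisms. The exact sequence $0\to Y_M\to X_M\to M\to 0$ then places $M$ in the resolving closure, since a resolving subcategory is closed under cokernels of monomorphisms between its members whenever the cokernel is already known to lie in $\mod R$ with the appropriate Ext vanishing.

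This last point also needs care. I would instead argue via $\syz^d M\in\X\cap\cm R$ together with the truncated resolution, or run a dual argument with $0\to M\to Y^M\to X^M\to 0$.

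\textbf{$\Phi\Psi=\mathrm{id}$.} Given $\Y$ and $\Z$, we need $\res(\Y\cup\Z)\cap\fpd R=\Y$ and $\res(\Y\cup\Z)\cap\cm R=\Z$. To do this, I would describe $\res(\Y\cup\Z)$ explicitly as the class $\mathcal{W}$ of modules $M$ with $X_M\in\Z$ and $Y_M\in\Y$, or with a similar two-sided condition. The steps are:
\begin{enumerate}
\item Show that $\mathcal{W}$ is resolving. This uses the horseshoe-type construction of approximations for extensions and kernels of epimorphisms.
\item Show that $\mathcal{W}$ contains $\Y\cup\Z$, which is immediate.
\item Show that the intersections of $\mathcal{W}$ with $\fpd R$ and with $\cm R$ return $\Y$ and $\Z$. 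For $M\in\fpd R$, the approximation $X_M$ is free and $Y_M$ is stably $M$. For $M\in\cm R$, we have $Y_M=0$ up to free modules.
\end{enumerate}
The vanishing $\Ext^{>0}(\cm,\fpd)=0$ ensures that approximations are unique up to free summands, which is what makes this description well defined.

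\textbf{Expected obstacle.} The hard step is showing that the approximation terms $X_M$ and $Y_M$ (equivalently, the resolving-closure description $\mathcal{W}$) are compatible with the resolving operations, and in particular that $X_M\in\X$ whenever $M\in\X$. The construction of $X_M$ uses cosyzygies, and a general resolving subcategory is not closed under these. Here the complete intersection hypothesis must enter. I would first try to use the fact that, over a complete intersection, thick and resolving subcategories of $\cm R$ are controlled by support varieties. This would show that $X_M$ and $\syz^d M$ generate the same resolving subcategory up to $\fpd$-modules already in $\X$.
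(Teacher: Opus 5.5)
The paper does not prove this theorem; it quotes it from \cite{DT15}. Your proposal therefore has to stand on its own, and as written it has two genuine gaps.

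\textbf{First gap: $\Psi\Phi=\mathrm{id}$.} The step you flag as the obstacle is the whole content of this direction, and it is left open. The support-variety remark points in the right direction but never states what is to be proved. The precise input is that over a complete intersection, every resolving subcategory of $\mod R$ contained in $\cm R$ is closed under cosyzygies \cite[Corollary 4.16]{DT14}; this paper invokes the same fact in its last corollary. Apply it to $\X\cap\cm R$, which contains $\syz^dM$. Up to free summands this gives $X_M\cong\syz^{-d}\syz^dM\in\X$ and, more importantly, $X^M\cong\syz^{-1}X_M\in\X$.

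You need $X^M$, not just $X_M$. The sequence $0\to Y_M\to X_M\to M\to0$ only exhibits $M$ as a cokernel of a monomorphism, and resolving subcategories are not closed under those; your sentence about ``appropriate Ext vanishing'' is not a valid closure property. Once $X^M\in\X$, use the hull $0\to M\to Y^M\to X^M\to0$. Extension closure gives $Y^M\in\X\cap\fpd R$. Then $M=\ker(Y^M\to X^M)$ is a kernel of an epimorphism, so $M\in\res((\X\cap\fpd R)\cup(\X\cap\cm R))$.

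\textbf{Second gap: $\Phi\Psi=\mathrm{id}$.} Your candidate $\mathcal W$ for $\res(\Y\cup\Z)$ is wrong. For $M\in\fpd R$ one has $X_M\in\cm R\cap\fpd R=\add R$, so $Y_M\cong\syz M$ up to free summands, not ``stably $M$''. Hence $\mathcal W\cap\fpd R=\{M\in\fpd R:\syz M\in\Y\}$, which is in general strictly larger than $\Y$. For example, take $\Y=\add R$ and a regular element $x\in\m$. Then $R/(x)\in\mathcal W$, but $\res(\add R\cup\Z)\subseteq\cm R$.

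Use hulls instead: set $\mathcal W'=\{M: Y^M\in\Y,\ X^M\in\Z\}$.
\begin{itemize}
\item For $M\in\fpd R$, the hull splits because $\Ext^1_R(\cm R,\fpd R)=0$. So $Y^M\cong M\oplus F$ with $F$ free, and $\mathcal W'\cap\fpd R=\Y$.
\item For $M\in\cm R$, one has $X^M\cong\syz^{-1}M$. Here $\mathcal W'\cap\cm R\subseteq\Z$ is automatic, but $\Z\subseteq\mathcal W'$ again needs the cosyzygy closure of $\Z$.
\item $\mathcal W'\subseteq\res(\Y\cup\Z)$ by the kernel argument above.
\end{itemize}
Closure of $\mathcal W'$ under extensions and summands is the routine horseshoe part. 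Closure under kernels of epimorphisms is not a horseshoe argument. Given $0\to M'\to M\to M''\to0$ with $M,M''\in\mathcal W'$, set $Q=Y^M/M'$, which sits in $0\to M''\to Q\to X^M\to0$. Take an MCM approximation of $Q$ and pull back $Y^M\to Q$ along it. Up to free summands one has $Y_N\cong\syz Y^N$ and $X_N\cong\syz X^N$. This yields a hull of $M'$ fitting, up to free summands, into $0\to\syz Y^{M''}\to Y^{M'}\to Y^M\to0$ and $0\to\syz X^{M''}\to X^{M'}\to X^M\to0$. Hence $M'\in\mathcal W'$.
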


Thanks to the following two theorems, we observe that the resolving subcategory $\X$ of $\mod R$ has finite radius if and only if $\X$ is contained in $\cm R$ over a complete, complete intersection ring $R$ with perfect coefficient field.

\begin{thm}[{\cite[Theorem I]{DT14}}]\label{4}
Let $R$ be a complete intersection ring and $\X$ a resolving subcategory of $\mod R$. If $\X$ has finite radius, then $\X$ is contained in $\cm R$.
\end{thm}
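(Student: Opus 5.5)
We argue by contraposition: assume $\X$ contains a module $M$ that is not maximal Cohen--Macaulay, and show that no ball $[C]_n$ can contain $\X$. The argument has three steps.

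\emph{Step 1: $\X$ contains $R/x^jR$ for every $j\ge1$, for a suitable $x$.} Put $d=\dim R>0$ (if $d=0$ every module is maximal Cohen--Macaulay and there is nothing to prove) and $t=\depth M<d$. For complete intersections, Dao and Takahashi's description of resolving closures (the machinery behind Theorem \ref{3}) shows that $\res M$ contains every module of finite projective dimension at most $d-t$. The precise input is that $\res M\cap\fpd R$ is determined by the depth of $M$. In particular $\X$ contains every module of projective dimension $1$. Fix a nonzerodivisor $x\in\m$. Then $R/x^jR\in\X$ for all $j\ge1$, and
$$\Ext^1_R(R/x^jR,R)\cong R/x^jR,\qquad \operatorname{ann}_R\Ext^1_R(R/x^jR,R)=x^jR.$$

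\emph{Step 2: an annihilator attached to the ball $[C]_n$.} Suppose for contradiction that $\X\subseteq[C]_n$ for some module $C$ and some $n\ge1$. Let $I$ be the annihilator of $\bigoplus_{i\ge1}\Ext^i_R(C,R)$. We claim $I$ contains a nonzerodivisor. At each minimal prime $\mathfrak p$, the ring $R_{\mathfrak p}$ is Artinian Gorenstein, hence self-injective, so $\Ext^{>0}_R(C,R)_{\mathfrak p}=0$. Each single module $\Ext^i_R(C,R)$ is therefore annihilated by a nonzerodivisor. To get one element working for \emph{all} $i$ simultaneously, we use that over a complete intersection $\bigoplus_i\Ext^i_R(C,R)$ is finitely generated over the ring of Eisenbud cohomology operators $R[\chi_1,\dots,\chi_c]$. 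Hence a single nonzerodivisor annihilates all the $\Ext^i$ with $i\ge1$, up to finitely many generators, and thus all of them. Replacing $C$ by $C\oplus R$ does not change $I$.

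Next, every $N\in[C]_1$ is a summand of $R^a\oplus\bigoplus_i(\syz^iC)^{b_i}$. Since $\Ext^{1}(\syz^iC,R)=\Ext^{1+i}(C,R)$, we get $I\cdot\Ext^{1}_R(N,R)=0$. For $N\in[C]_r$ there is an exact sequence $0\to A\to N'\to B\to 0$ with $A\in[[C]_{r-1}]$, $B\in[C]$ and $N$ a summand of a syzygy of $N'$. Passing through syzygies only shifts Ext upward, and the long exact sequence of $\Ext_R(-,R)$ then gives by induction $I^r\,\Ext^{>0}_R(N,R)=0$. We must track $\Ext^{\ge1}$ throughout, not just $\Ext^1$, so that the syzygy step is harmless.

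\emph{Step 3: conclusion.} Combining the two steps, $I^n\subseteq\operatorname{ann}\Ext^1(R/x^jR,R)=x^jR$ for all $j$. By Krull's intersection theorem this forces $I^n\subseteq\bigcap_jx^jR=0$, contradicting that $I$ contains a nonzerodivisor. Hence $\X$ has infinite radius.

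The main obstacle is Step 2: producing one nonzero element that annihilates all higher Ext modules $\Ext^{>0}_R(C,R)$ at once. This is where the complete intersection hypothesis, through finite generation over the cohomology operators, is really used. The other delicate input is Step 1, i.e.\ quoting the right statement that a non-maximal Cohen--Macaulay module generates, under resolving closure, modules of projective dimension $1$ such as $R/x^jR$.
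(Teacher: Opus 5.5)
\textbf{Step 1 is false, and the family $R/x^jR$ need not lie in $\X$ at all.} Steps 2 and 3 are sound: Step 2 gives the Ext-annihilator bound $I^n\Ext^{>0}_R(N,R)=0$ on the ball $[C]_n$, and Krull's intersection theorem finishes the argument.

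Take $R=k[[x,y]]$ and let $\X$ be the subcategory of modules that are free on the punctured spectrum. It is resolving and contains $k$, whose depth is $0<2$. Let $x\in\m$ be any non-zerodivisor and $\mathfrak p$ a prime minimal over $xR$. Then $\mathfrak p$ has height $1$, so $\mathfrak p\neq\m$, and $(R/x^jR)_{\mathfrak p}$ is a nonfree module of projective dimension $1$ over $R_{\mathfrak p}$. Hence $R/x^jR\notin\X$, and likewise $R/x^jR\notin\res k\subseteq\X$. So your claim that $\res M$ contains every module of projective dimension at most $d-t$ fails.

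In Dao--Takahashi's classification, $\X\cap\fpd R$ is not determined by $\depth M$ alone. It has the form $\{N\in\fpd R:\pd N_{\mathfrak p}\le f(\mathfrak p)\text{ for all }\mathfrak p\}$ for a grade-consistent function $f$ on $\operatorname{Spec}R$, which involves the depths of all localizations. A non-maximal Cohen--Macaulay $M\in\X$ only guarantees $f(\m)\ge d-\depth M\ge 1$. To see this, Theorem~\ref{3} gives $\X=\res\bigl((\X\cap\fpd R)\cup(\X\cap\cm R)\bigr)$, and the depth lemma then forces $\X\cap\fpd R$ to contain a module of depth at most $\depth M$. Nothing more is forced: $f$ may vanish on the whole punctured spectrum, as in the example.

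\textbf{How to repair it.} Use a family that is free on the punctured spectrum. For a maximal $R$-sequence $x_1,\dots,x_d$, set $N_j=\syz^{d-1}\bigl(R/(x_1^j,\dots,x_d^j)\bigr)$.
\begin{itemize}
\item Each $N_j$ has projective dimension $1$ and is free on the punctured spectrum, since $(x_1^j,\dots,x_d^j)$ is $\m$-primary. So $N_j\in\X$ by the correct form of the classification.
\item By dimension shifting and Koszul self-duality, $\Ext^1_R(N_j,R)\cong\Ext^d_R\bigl(R/(x_1^j,\dots,x_d^j),R\bigr)\cong R/(x_1^j,\dots,x_d^j)$.
\end{itemize}
Your Step 3 then gives $I^n\subseteq\bigcap_j\m^j=0$, contradicting that $I$ contains a non-zerodivisor. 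When $d=1$ your original family is already of this type, so the gap only matters for $d\ge 2$.

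\textbf{On Step 2.} The Eisenbud operators are unnecessary. Since $R$ is Gorenstein, $\Ext^{>d}_R(C,R)=0$, so $\bigoplus_{i\ge1}\Ext^i_R(C,R)$ is already a finitely generated $R$-module vanishing at the minimal primes. The complete intersection hypothesis is therefore not used in Step 2, as you suggest. It is used in Step 1, through Theorem~\ref{3} and the classification of \cite{DT15}.
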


\begin{thm}[{\cite[Theorem I\!I]{DT14}}]\label{5}
Let $R$ be a Cohen--Macaulay complete ring with perfect coefficient field. Then $\cm R$ has finite radius.
\end{thm}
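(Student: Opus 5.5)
The plan is to realize every maximal Cohen--Macaulay module as a direct summand of a syzygy of a module over an artinian ring (or a ring of smaller dimension), where a bounded number of extensions suffices. The key elementary lemma is this: if $x\in R$ is $R$-regular and $x$ annihilates $\Ext^1_R(M,\syz M)$ for a maximal Cohen--Macaulay module $M$, then $M$ is a direct summand of $\syz_R(M/xM)$. To see it, compare the sequence $0\to M\xrightarrow{x}M\to M/xM\to 0$ with the syzygy sequence $0\to\syz M\to F\to M\to 0$. The class of the latter in $\Ext^1_R(M,\syz M)$ is killed by $x$, so multiplication by $x$ factors appropriately, and a pullback/diagram chase splits off $M$ from $\syz(M/xM)$ up to free summands. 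Iterating along a regular sequence $\mathbf{x}=x_1,\dots,x_d$ of such elements shows that $M$ is a direct summand of $\syz^d(M/\mathbf{x}M)$, up to free summands.

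Granting an ideal $\mathfrak a$ that contains an $\m$-primary system of parameters and annihilates $\Ext^{>0}_R(M,N)$ (or at least $\Ext^1_R(M,\syz M)$) for all $M\in\cm R$, I would finish as follows. Choose $\mathbf{x}$ in $\mathfrak a$ so that $\mathbf{x}$ is a system of parameters. Then $M/\mathbf{x}M$ is a module over the artinian ring $R/(\mathbf{x})$, so it has a filtration by copies of $k$ of length at most $\ell\ell(R/(\mathbf{x}))$, a bound independent of $M$. Taking $d$-th syzygies over $R$ of this filtration, via the horseshoe lemma, places $\syz^d(M/\mathbf{x}M)$ in $[\syz^d k]_{\ell}$. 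Hence $\cm R\subseteq[\syz^d k]_{\ell}$, which gives finite radius.

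The hypotheses enter in producing $\mathfrak a$. Since $R$ is complete with perfect coefficient field $K$, Cohen's structure theorem writes $R$ as a quotient of a power series ring over $K$, and $R$ is a finite module over a Noether normalization $A=K[[t_1,\dots,t_d]]$. Perfectness ensures that $R$ is generically separable over $A$, and this gives a nonzero Jacobian (or Noether different) ideal. By the theorem of Wang and Iyengar--Takahashi, some power of the Jacobian ideal annihilates $\Ext^{d+1}_R(M,N)$ for all $M,N$, hence $\Ext^1_R(M,N)$ for $M\in\cm R$ after shifting by syzygies.

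The main obstacle is that the Jacobian ideal is $\m$-primary only when $R$ has an isolated singularity. In general it merely defines the singular locus, and for non-reduced $R$ it contains no nonzerodivisor. My first attempt would be an induction on $\dim R$. Pick a regular sequence inside the Jacobian ideal of maximal length $t$, and apply the lemma to land $M$ as a summand of $\syz^t(M/\mathbf{x}M)$, where $M/\mathbf{x}M$ is a module over $R/(\mathbf{x})$. Then handle $R/(\mathbf{x})$ by filtering modules by those supported on the singular locus, using induction on dimension together with the gluing property $[\X]_a\bullet[\Y]_b\subseteq[\X\oplus\Y]_{a+b}$ to keep the radius bounded. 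If $R$ is not reduced, I would first filter by the nilradical, which has bounded nilpotency degree, and pass to $R_{\mathrm{red}}$, which is still complete with perfect residue field.
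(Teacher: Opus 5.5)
Your isolated-singularity argument is correct. Pulling back $0\to\syz M\to F\to M\to 0$ along $x\colon M\to M$ gives $0\to\syz M\to\syz(M/xM)\to M\to 0$, whose class is $x$ times the syzygy class. The iteration works because $\syz^{i-1}(M/(x_1,\dots,x_{i-1})M)$ is again maximal Cohen--Macaulay. The Loewy series of $M/\mathbf{x}M$ then gives $\cm R\subseteq[k]_\ell$; note its factors are $k^{\oplus m}$, not single copies of $k$, whose number is unbounded.

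Two small repairs there. First, over a non-Gorenstein ring an MCM module need not be a $d$-th syzygy, so ``shifting'' does not give annihilation of $\Ext^1_R(M,-)$. Use instead that for $\delta$ in the Noether different of $A\to R$, $\delta\cdot 1_M$ factors through the free $R$-module $R\otimes_A M$, because $M$ is $A$-free. Second, perfectness gives generic separability only when $R$ is reduced.

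The genuine gap is the non-isolated case, which is the real content of the theorem, and your sketch does not close it. Non-reduced rings are forced on you even when $R$ is reduced: for $R=K[[a,b,c]]/(ab)$ one has $J=(a,b)$ of height one, and $x=a+b$ gives $R/(x)\cong K[[a,c]]/(a^2)$. Over a non-reduced Cohen--Macaulay ring, $J$ lies in a minimal prime $\mathfrak p$ with $R_{\mathfrak p}$ non-regular. So $J$ contains no nonzerodivisor, $t=0$, and your lemma yields nothing. The fallbacks also fail:
\begin{enumerate}
\item[(i)] Filtering by modules supported on the singular locus has unbounded length, since $\Gamma_J(L)$ is killed by $J^n$ only for some $n$ depending on $L$.
\item[(ii)] Filtering by powers of the nilradical has bounded length, but $\dim R_{\mathrm{red}}=\dim R$, $R_{\mathrm{red}}$ need not be Cohen--Macaulay, and the factors are far from maximal Cohen--Macaulay. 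Over $K[[a,c]]/(a^2)$ the maximal Cohen--Macaulay ideal $M=(a,c^n)$ has $M/aM\cong K[[c]]\oplus K[[c]]/(c^n)$. No single ball contains all $K[[c]]/(c^n)$, since finite-length modules in a fixed ball over a one-dimensional Cohen--Macaulay ring have bounded Loewy length.
\item[(iii)] Even when $t\ge 1$, finite radius of $\cm(R/(\mathbf x))$ does not transfer directly to $R$, because balls over $R/(\mathbf x)$ involve $R/(\mathbf x)$-syzygies of every order. You would at least have to induct on $\size$ instead.
\end{enumerate}

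What is missing is the right intermediate statement. The route indicated in Remark \ref{6} goes through strong generation of high syzygies of \emph{all} modules, $\syz^s(\mod R)\subseteq|G|_{n+1}$. Only afterwards does it pass to $\cm R$, obtaining finite size and hence finite radius. Unlike a statement about $\cm$, this survives the reductions you need:
\begin{enumerate}
\item[(a)] Bounded filtrations and Noetherian induction on $\operatorname{Spec}R$ reduce it to modules over the complete domains $R/\mathfrak p$.
\item[(b)] Each $R/\mathfrak p$ has a nonzero Ext-annihilator $x$; this is where the perfect coefficient field is genuinely used. It is regular on $R/\mathfrak p$-syzygies, so your lemma over $R/\mathfrak p$ realizes high syzygies as summands of syzygies of $R/(\mathfrak p+xR)$-modules, which the induction covers.
\item[(c)] $R/\mathfrak p$-syzygies convert into $R$-syzygies at the cost of a bounded number of extensions.
\end{enumerate}
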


\begin{rem}\label{6}
In Theorem \ref{5}, the condition that $R$ is complete with perfect coefficient field can be replaced with the condition that $R$ is quasi-excellent with a canonical module $\omega$. Let us show this in the following. For a subcategory $\X$ of $\mod R$, we put $|\X|=\add\X$. The {\em size} of a subcategory is defined by replacing ``[-]" with ``$|$-$|$" in the definition of the radius. Since $|\X|_n$ is contained in $[\X]_n$ for all $n>0$, we have $\radius\X \le \size\X$. Let $R$ be a quasi-excellent Cohen--Macaulay ring with a canonical module $\omega$. According to \cite[Corollary 3.12]{DLT25}, there exists an $R$-module $G$ such that $\syz^s(\mod R)$ is contained in $|G|_{n+1}$ for some $s,n\ge 0$. In the proof of \cite[Corollary 5.9]{DT14}, replacing ``$d$'' with ``$s$'', we obtain $\size\cm R<\infty$. Hence $\cm R$ has finite radius. 
\end{rem}

\section{Proof of the main theorem}

In this section, we prove the main theorem step by step. Throughout this section, we let $\X$ be a resolving subcategory of $\mod R$. We show three lemmas.

\begin{lem}\label{3.2}
Let $R$ be a henselian ring. Suppose that $\X$ is not contravariantly infinite. Then the equality $\X \cap \fpd R= \add R$ holds.
\end{lem}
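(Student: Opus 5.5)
The plan is to play a nonzero module of $\X^{\bot}$, supplied by Lemma \ref{1}, against a module of finite projective dimension in $\X$: such a module has nonvanishing top $\Ext$ into every nonzero module, so it must be free.

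\textbf{Step 1.} The inclusion $\add R\subseteq \X\cap\fpd R$ is immediate. Indeed, $\X$ is resolving, so it contains $R$ and is closed under finite direct sums (extensions of split type) and direct summands. Free modules obviously have finite projective dimension.

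\textbf{Step 2.} For the reverse inclusion, I use the hypothesis. Since $\X$ is not contravariantly infinite, there is an $R$-module $M\notin\X$ admitting a right $\X$-approximation, so $M\in\rap\X$. As $R$ is henselian, Lemma \ref{1} gives an exact sequence $0\to Y\to X\to M\to 0$ with $X\in\X$ and $Y\in\X^{\bot}$. The key point is $Y\neq 0$: if $Y=0$, then $M\cong X\in\X$, a contradiction. So $\X^{\bot}$ contains a nonzero module $Y$.

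\textbf{Step 3.} Now let $Z\in\X\cap\fpd R$ and suppose $p=\pd Z>0$. Since $Z\in\X$ and $Y\in\X^{\bot}$, we have $\Ext^p_R(Z,Y)=0$.

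Take a minimal free resolution $0\to R^{b_p}\xrightarrow{A} R^{b_{p-1}}\to\cdots\to R^{b_0}\to Z\to 0$ with $b_p\neq0$ and all entries of $A$ lying in $\m$. Applying $\Hom_R(-,Y)$ and using that the complex stops at degree $p$, we get
$$\Ext^p_R(Z,Y)\cong\operatorname{Coker}\bigl(A^{T}\colon Y^{b_{p-1}}\to Y^{b_p}\bigr).$$
The image of $A^{T}$ lies in $\m Y^{b_p}$. By Nakayama's lemma, $\m Y^{b_p}\neq Y^{b_p}$ because $Y\neq0$. Hence this cokernel is nonzero, a contradiction. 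Therefore $p=0$, so $Z$ is free and $Z\in\add R$.

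The only step needing care is the nonvanishing $\Ext^{\pd Z}_R(Z,Y)\neq0$ for nonzero $Y$. It is the standard minimal-resolution and Nakayama argument above, so I do not expect a real obstacle.
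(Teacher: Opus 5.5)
Your proof is correct and follows essentially the same route as the paper: Lemma \ref{1} produces a nonzero $Y\in\X^{\bot}$, and then the last matrix of a minimal free resolution of $Z$, combined with Nakayama's lemma, forces $\Ext^{\pd Z}_R(Z,Y)\neq 0$, which contradicts $Y\in\X^{\bot}$. The only cosmetic difference is that the paper passes to $\syz^{n-1}Z\in\X$ and works with $\Ext^1$, whereas you compute $\Ext^p_R(Z,Y)$ directly as a cokernel. You also verify the trivial inclusion $\add R\subseteq\X\cap\fpd R$ explicitly, which the paper leaves implicit.
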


\begin{proof}
By assumption, there exists an $R$-module $M$ that belongs to $\rap{\X}$ but not to $\X$. By Lemma \ref{1}, we obtain a short exact sequence $0 \to Y\to X\to M\to 0$ with $X\in \X$, $Y\in \X^{\bot}$ and $Y\neq 0$. Suppose that $\X \cap \fpd{R}$ strictly contains $\add{R}$. Then there exists an $R$-module $Z \in \X$ of finite positive projective dimension, say $n$. Hence we obtain a short exact sequence $0 \to R^{\oplus{p}} \stackrel{A}{\to} R^{\oplus{q}} \to \Omega^{n-1}Z \to 0$ such that $p,q \in \ZZ_{>0}$ and any entry of the matrix $A$ is in $\m$. Applying $\Hom_R{(-,Y)}$ to it, we get an exact sequence $Y^{\oplus{q}} \xrightarrow{{}^\mathrm{t}A} Y^{\oplus{p}} \to \Ext^1_R{(\Omega^{n-1}Z,Y)}$, where ${}^\mathrm{t}A$ denotes the transpose of the matrix $A$. The Ext module vanishes because $\Omega^{n-1}Z$ is in $\X$ and $Y$ is in $\X^{\bot}$. So we have $Y^{\oplus{p}} = \m Y^{\oplus{q}}$. Nakayama's lemma implies $Y=0$, which is a contradiction.
\end{proof}

\begin{lem}\label{3.3}
Suppose that $R$ is Gorenstein with $d=\dim R>0$. If all the modules in $\X$ are maximal Cohen--Macaulay, then $\X$ is not contravariantly infinite.
\end{lem}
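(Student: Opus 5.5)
We have to exhibit a module outside $\X$ that nevertheless admits a right $\X$-approximation. Since $\X\subseteq\cm R$ and $d>0$, a natural candidate is a module of finite positive projective dimension. By Lemma \ref{1}, $M\in\rap\X$ as soon as there is an exact sequence $0\to Y\to X\to M\to 0$ with $X\in\X$ and $Y\in\X^{\bot}$. Over a Gorenstein ring every module of finite projective dimension lies in $\cm R^{\bot}$: if $Y$ has finite projective dimension and $X$ is maximal Cohen--Macaulay, then $\Ext^{>0}_R(X,Y)=0$. This follows by induction on $\pd Y$ from $\Ext^{>0}_R(X,R)=0$, which holds since $R$ is Gorenstein and $X$ is maximal Cohen--Macaulay. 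Hence $\fpd R\subseteq\cm R^{\bot}\subseteq\X^{\bot}$.

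Take $M$ to be any nonzero module of finite positive projective dimension, for instance $M=R/xR$ with $x\in\m$ a non-zerodivisor. Such an $x$ exists because $R$ is Cohen--Macaulay of positive dimension, so $\depth R=d>0$. Then $\pd M=1$, and the exact sequence $0\to R\xrightarrow{x}R\to M\to 0$ has $R\in\X$ (as $\X$ is resolving) and $R\in\fpd R\subseteq\X^{\bot}$. By Lemma \ref{1}, $M\in\rap\X$. On the other hand, $\depth M=d-1<d$, so $M$ is not maximal Cohen--Macaulay and hence $M\notin\X$. Thus $\rap\X\ne\X$, which means $\X$ is not contravariantly infinite.

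The direction of Lemma \ref{1} used here is the easy "if" direction, and it needs no henselian hypothesis. Given a map $f'\colon X'\to M$ with $X'\in\X$, the obstruction to lifting it through $X\to M$ lies in $\Ext^1_R(X',Y)$, which is $0$. To avoid relying on the henselian hypothesis in Lemma \ref{1}, I would include this one-line check directly rather than cite the lemma.

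The only step needing care is the vanishing $\Ext^{>0}_R(X,Y)=0$ for $X$ maximal Cohen--Macaulay and $Y$ of finite projective dimension. With $Y=R$ this is immediate from the Gorenstein property, and that is all the argument actually uses. So there is no real obstacle; the content of the lemma is that $R/xR$ has a right $\cm R$-approximation, in fact even a right $\add R$-approximation.
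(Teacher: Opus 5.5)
Your proof is correct and matches the paper's: both use an $R$-regular element $x\in\m$ and the sequence $0\to R\xrightarrow{x}R\to R/(x)\to 0$ with $R\in\X\cap\X^{\bot}$, then Lemma \ref{1}, to get $R/(x)\in\rap\X\setminus\X$. Your direct check of the easy direction of Lemma \ref{1} via $\Ext^1_R(X',R)=0$ is a worthwhile addition, because Lemma \ref{3.3} has no henselian hypothesis while Lemma \ref{1} is stated only for henselian rings.
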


\begin{proof}
It suffices to show that $\rap\X$ contains an $R$-module which does not belong to $\X$. Since $R$ is Cohen--Macaulay, we have $\depth{R} = d > 0$. Hence there exists an $R$-regular element $x$ of $\m$. There is a short exact sequence
\begin{equation}\label{3.3.1}
0 \to R\stackrel{x}{\to} R\to R/(x) \to 0.
\end{equation}
\noindent
As $\depth{R/(x)} = d-1$, the $R$-module $R/(x)$ is not maximal Cohen--Macaulay. By assumption, $R/(x)$ does not belong to $\X$. The module $R$ belongs to both $\X^{\bot}$ and $\X$ since $\X$ is contained in $\cm R$ and resolving. Lemma \ref{1} implies that $R/(x)$ admits a right $\X$-approximation, that is, $R/(x)$ belongs to $\rap{\X}$.
\end{proof}

\begin{lem}\label{3.4}
Let $R$ be a henselian AB ring with $\dim R>0$. Then $\X$ is contained in $\cm{R}$ if and only if $\X$ is not contravariantly infinite.
\end{lem}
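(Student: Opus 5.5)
The plan is to prove the two implications separately, using Theorem \ref{2} for one and Lemma \ref{3.3} for the other.

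For the direction ``not contravariantly infinite $\Rightarrow$ contained in $\cm R$'', I would invoke Theorem \ref{2} directly. Its hypotheses are that $R$ is a henselian AB ring and that $\X$ is resolving, and both hold here. The positivity of $\dim R$ plays no role in this direction.

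For the converse, assume $\X\subseteq\cm R$. Lemma \ref{3.3} then says $\X$ is not contravariantly infinite, provided $R$ is Gorenstein of positive dimension. An AB ring is Gorenstein by the definition recalled before Theorem \ref{2}, and $\dim R>0$ is assumed. So Lemma \ref{3.3} applies verbatim.

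It may be worth recalling why that lemma works, to confirm nothing further is needed. Take an $R$-regular element $x\in\m$. The module $R/(x)$ has depth $d-1$, so it lies outside $\cm R\supseteq\X$. On the other hand, $0\to R\to R\to R/(x)\to 0$ exhibits $R/(x)$ in $\rap\X$ by Lemma \ref{1}: the module $R$ lies in $\X$, and $R\in\X^{\bot}$ because $\Ext^{>0}_R(X,R)=0$ for maximal Cohen--Macaulay $X$ over a Gorenstein ring. Lemma \ref{1} needs $R$ henselian, which is assumed.

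There is no real obstacle here. The only point to check is that the hypotheses of the cited results match: AB implies Gorenstein, and henselian is required by Theorem \ref{2} and Lemma \ref{1}.
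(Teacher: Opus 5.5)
Your proposal is correct and is the paper's own proof: the implication from $\X\subseteq\cm R$ to $\X$ not being contravariantly infinite is Lemma \ref{3.3}, using that AB rings are Gorenstein, and the converse is Theorem \ref{2}. Your check of the hypotheses is accurate. As a small aside, the direction of Lemma \ref{1} used inside Lemma \ref{3.3} does not actually require henselianity: $\Ext^1_R(X',R)=0$ already makes $R\to R/(x)$ a right $\X$-approximation.
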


\begin{proof}
The ``only if'' part follows from Lemma \ref{3.3}, while the ``if'' part is shown by Theorem \ref{2}.
\end{proof}

Now we are ready to prove our main result.

\begin{thm}\label{3.5}
Let $R$ be a henselian complete intersection ring with $\dim R>0$. Then the following conditions are equivalent.
\begin{enumerate}[label=\textnormal{(\arabic*)}]
\item
The subcategory $\X$ is not contravariantly infinite.
\item
The subcategory $\X$ contains no module of finite positive projective dimension.
\item
All modules in $\X$ are maximal Cohen--Macaulay.
\end{enumerate}
\noindent
If in addition $R$ is quasi-excellent, then the above three conditions are equivalent to the following condition as well.
\begin{enumerate}[resume,label=\textnormal{(\arabic*)}]
\item
The subcategory $\X$ has finite radius.
\end{enumerate}
\end{thm}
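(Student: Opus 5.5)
The plan is to prove the cycle $(1)\Rightarrow(2)\Rightarrow(3)\Rightarrow(1)$ and then attach $(4)$ through $(3)$. A complete intersection is an AB ring by \cite[Corollary 3.4]{HJ03}, so Lemma \ref{3.4} already gives $(1)\Leftrightarrow(3)$ under our hypotheses. It therefore remains to fit $(2)$ into this loop.

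\textbf{The implication $(1)\Rightarrow(2)$.} This is immediate from Lemma \ref{3.2}, since $R$ is henselian. If $\X$ is not contravariantly infinite, then $\X\cap\fpd R=\add R$. Hence every module of finite projective dimension in $\X$ is free, so $\X$ contains no module of finite positive projective dimension.

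\textbf{The implication $(2)\Rightarrow(3)$.} This is the step I expect to carry the real content, and I would use Theorem \ref{3}. Let $\Phi(\X)=(\Y,\Z)$ with $\Y=\X\cap\fpd R$ and $\Z=\X\cap\cm R$. By $(2)$, $\Y=\add R$. Since $\Z\subseteq\cm R$ is resolving, $\add R\subseteq\Z$, and hence
$$\Psi(\Y,\Z)=\res(\add R\cup\Z)=\res\Z.$$
Because $\cm R$ is itself resolving for a Cohen--Macaulay ring and contains $\Z$, we get $\res\Z\subseteq\cm R$; in fact $\res\Z=\Z$. As $\Psi\Phi(\X)=\X$, it follows that $\X=\Z\subseteq\cm R$, which is $(3)$.

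\textbf{The implication $(3)\Rightarrow(1)$.} This is the ``only if'' part of Lemma \ref{3.4}, namely Lemma \ref{3.3}, which uses that a complete intersection is Gorenstein and that $\dim R>0$.

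\textbf{Adding condition $(4)$.} Assume further that $R$ is quasi-excellent.
\begin{itemize}
\item $(4)\Rightarrow(3)$ is exactly Theorem \ref{4}.
\item For $(3)\Rightarrow(4)$, note that a complete intersection is Gorenstein, so $\omega=R$ is a canonical module. By Remark \ref{6}, $\cm R$ has finite radius; say $\cm R\subseteq[C]_{n+1}$ for some module $C$. If $\X\subseteq\cm R$, then $\X\subseteq[C]_{n+1}$ as well, so $\radius\X\le n<\infty$.
\end{itemize}

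The only point needing care is the use of Remark \ref{6} in place of Theorem \ref{5}, together with the monotonicity of radius under inclusion. The latter is immediate from the definition, since the radius is an infimum over containments $\X\subseteq[C]_{n+1}$.
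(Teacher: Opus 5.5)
Your proposal is correct and follows the paper's proof step for step. It uses Lemma \ref{3.2} for (1)$\Rightarrow$(2), the correspondence of Theorem \ref{3} to get $\X=\res(\X\cap\cm R)\subseteq\cm R$ for (2)$\Rightarrow$(3), Lemma \ref{3.4} for (1)$\Leftrightarrow$(3), and Theorem \ref{4} with Remark \ref{6} for (3)$\Leftrightarrow$(4); your remarks that $\omega=R$ is a canonical module and that radius is monotone under inclusion only spell out details the paper leaves implicit.
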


\begin{proof}
The implication (1) $\Rightarrow$ (2) follows from Lemma \ref{3.2}. If $\X \cap \fpd{R} = \add{R}$ holds, then we obtain the following equalities by Theorem \ref{3}.
\begin{align*}
    \X &= \Psi \Phi(\X) = \Psi(\X \cap \fpd R,\X \cap \cm R) = \Psi(\add R,\X \cap \cm R)\\
    &= \res(\add R\cup (\X \cap \cm R)) = \res(\X\cap\cm R)\subseteq\res\cm R=\cm R.
\end{align*}
Therefore the implication (2) $\Rightarrow$ (3) holds. The equivalence (1) $\Leftrightarrow$ (3) (\resp. (3) $\Leftrightarrow$ (4)) is a consequence of Lemma \ref{3.4} (\resp. Theorem \ref{4} and Remark \ref{6}).
\end{proof}

\begin{rem}\label{rem}
It is shown by Auslander, Ding and Solberg \cite{ADS} that complete intersection rings satisfy the following condition, which is called the {\em Auslander--Reiten condition}.
\vspace{5pt}
\begin{center}
\begin{tabular}{@{}l l@{}}
(\textbf{ARC}) & For every $R$-module $M$, if $\Ext_R^{>0}(M,M \oplus R) =0$, then $M$ is free.
\end{tabular}
\end{center}
\vspace{5pt}
\noindent
We say that an $R$-module $M$ has {\em complexity c} if $c$ is the least nonnegative integer $n$ such that there exists a real number $\alpha$ such that the $i$-th Betti number of $M$ is bounded above by $\alpha i^{n-1}$ for all $i\gg 0$. We denote by $\cx_R M$ the complexity of $M$. Let $R$ be an Artinian complete intersection ring. Then the statements (2) and (3) in Theorem \ref{3.5} hold obviously, while (4) follows from the proof of \cite[Theorem 5.7]{DT14}. However the example below indicates the necessity of the assumption in Theorem \ref{3.5} that $\dim R$ is positive.
\end{rem}

\begin{ex}\label{ex}
Let $R$ be an Artinian complete intersection, $c$ the codimension of $R$, and $\E$ the subcategory of $\mod R$ consisting of $R$-modules whose complexity are less than $c$. Then $\E$ is resolving, see \cite[Proposition 4.2.4]{Av98}. Now we consider the case where $c \ge 2$. Assume that $\E$ is not contravariantly infinite. Then there exists an exact sequence
$$
0 \to M \to E \to L \to 0
$$
of $R$-modules with $L\notin\E$, $E\in\E$ and $M \in \E^{\bot}$. Suppose that $\cx_R M$ is less than $c$. Then $M$ is in $\E$. Since $R$ satisfies (\textbf{ARC}), $M$ is free. As $R$ is Artinian Gorenstein, the above exact sequence splits, and we get an isomorphism $E\cong M\oplus L$. This implies that $L$ belongs to $\E$, which is a contradiction. So we have $\cx_R M =c$. Then, according to \cite[Proposition 2.7]{CDT14}, $M$ is a {\em test module}, that is, all $R$-modules $N$ with $\tor_{\gg 0}^R(M,N) =0$ (this is equivalent to $\Ext_R^{\gg 0}(N,M)=0$, see \cite[Theorem I\!I\!I]{AvBu00}) have finite projective dimension. Thus $\E = \add R$, so that $c=1$. This contradicts the fact that $c\ge 2$. As a result, $\E$ is contravariantly infinite.
\end{ex}

\section{A question related to the main theorem}

In this section, we discuss the following question that is motivated by Theorem \ref{main}.

\begin{ques}
Does Theorem \ref{main} hold for all Gorenstein rings?
\end{ques}

Throughout this section, we fix the following notation.

\begin{nota}
Let $(R,\m,k)$ be a henselian Gorenstein local ring and $\X$ a resolving subcategory of $\mod R$.
\end{nota}

It is well known that $(\cm R,\fpd R)$ is a {\em cotorsion pair}, that is, $(\cm R)^{\bot} = \fpd R$ and $\cm R=\hspace{1pt} ^{\bot}(\fpd R)$, due to \cite{Isc69}. This fact is used in the proof of Proposition \ref{x}.

First, we recall the definitions of the functor category of an additive category, finitely generated objects and finitely presented objects, which were introduced by Auslander \cite{A66}. 

\begin{dfn}
Let $\A$ be an additive category.
\begin{enumerate}
\item We denote by $\Mod \A$ the {\em functor category} of $\A$. The objects of $\Mod \A$ are the additive contravariant functors from $\A$ to the category of abelian groups. The morphisms of $\Mod \A$ are the natural transformations.
\item We say that an object $F \in \Mod \A$ is {\em finitely generated} if there exists an exact sequence $\Hom_{\A}(-,A_0)\to F\to 0$ in $\Mod \A$ with $A_0 \in \A$, and that $F$ is {\em finitely presented} if there exists an exact sequence $\Hom_{\A}(-,A_1) \to \Hom_{\A}(-,A_0)\to F\to 0$ in $\Mod\A$ with $A_0,A_1 \in \A$. 
\item We denote by $\mod \A$ the subcategory of $\Mod \A$ consisting of the finitely presented objects of $\Mod \A$.
\end{enumerate}
\end{dfn}

Applying the lemma below to $\mod R$, we have that for an additive subcategory $\X$ of $\mod R$, an $R$-module $M$ belongs to $\rap \X$ if and only if $\Hom_R(-,M)|_{\X}$ belongs to $\mod \X$.

\begin{lem}[{\cite[Lemma 16]{T21}}]\label{c}
Let $\A$ be an additive category and $\X$ an additive subcategory of $\A$. An object $A \in \A$ admits a right $\X$-approximation if and only if the functor $\Hom_{\A}(-,A)|_{\X}$ is finitely generated.
\end{lem}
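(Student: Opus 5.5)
The plan is to prove both implications directly from the definitions, using the Yoneda lemma in $\Mod\X$ to translate natural transformations out of representable functors into morphisms in $\A$. Throughout, write $F=\Hom_{\A}(-,A)|_{\X}$. This is an additive contravariant functor $\X\to$ (abelian groups), hence an object of $\Mod\X$. Note that "finitely generated" in $\Mod\X$ refers to representable functors $\Hom_{\X}(-,X_0)$ with $X_0\in\X$. Since $\X$ is a full subcategory, $\Hom_{\X}(-,X_0)=\Hom_{\A}(-,X_0)|_{\X}$.

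The key step is the Yoneda correspondence. For $X_0\in\X$, natural transformations $\eta:\Hom_{\X}(-,X_0)\to F$ correspond bijectively to elements of $F(X_0)=\Hom_{\A}(X_0,A)$. The element is $f=\eta_{X_0}(1_{X_0})$, and conversely $\eta$ is given on each $X'\in\X$ by $\eta_{X'}(g)=fg$ for $g\in\Hom_{\A}(X',X_0)$. Under this correspondence, $\eta$ is an epimorphism in $\Mod\X$ exactly when each component $\eta_{X'}$ is surjective, since exactness in a functor category is checked objectwise. Unwinding, $\eta$ is an epimorphism if and only if every morphism $f':X'\to A$ with $X'\in\X$ factors as $f'=fg$ for some $g:X'\to X_0$. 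That is precisely the statement that $f:X_0\to A$ is a right $\X$-approximation.

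With this in hand, both directions are immediate. If $f:X_0\to A$ is a right $\X$-approximation, the associated transformation $\Hom_{\X}(-,X_0)\to F$ is surjective, so $F$ is finitely generated. Conversely, suppose there is an epimorphism $\Hom_{\X}(-,X_0)\to F$ with $X_0\in\X$. Take $f$ to be the image of $1_{X_0}$; by the computation above, $f$ is a right $\X$-approximation of $A$.

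There is no real obstacle here. The only point needing care is the identification of epimorphisms in $\Mod\X$ with objectwise surjections, together with the fullness of $\X$, which makes the representable functors of $\X$ the restrictions of those of $\A$. The additivity of $\X$ is only needed so that $\Mod\X$ and the notion of finite generation make sense. It also allows one to absorb a finite direct sum of representables into a single one if one prefers a definition with several generators.
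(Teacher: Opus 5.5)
Your proof is correct and is the standard Yoneda argument: a natural transformation $\Hom_{\X}(-,X_0)\to\Hom_{\A}(-,A)|_{\X}$ is determined by an element $f\in\Hom_{\A}(X_0,A)$. It is objectwise surjective exactly when $f$ is a right $\X$-approximation.

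The paper does not prove this lemma itself; it simply cites \cite[Lemma 16]{T21}. Your argument is the same Yoneda correspondence the paper relies on later, in the proof of Proposition \ref{?}, where it writes $\phi=\Hom_R(-,f)|_{\X}$.
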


Let $\C$ be a subcategory of $\mod R$ consisting of $R$-modules $C$ such that $\Hom_R(-,C)|_{\X}$ is in $\mod \X$, that is, $\Hom_R(-,C)|_{\X}$ is finitely presented. Obviously, the inclusions 
$$\X \subseteq \C \subseteq \rap \X$$
hold. From now on, we shall consider what properties the subcategory $\C$ satisfies and when these two inclusions are equalities. We begin with recalling the following lemma.

\begin{lem}[{\cite[Lemma 24]{T21}}]\label{a}
Let $0\to L\to M\to N\to 0$ be an exact sequence of $R$-modules. If $L \in \X^{\bot} \cap \C$ and $M \in \C$, then $N \in \C$.
\end{lem}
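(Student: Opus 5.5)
We must prove Lemma \ref{a}: given $0\to L\to M\to N\to 0$ with $L\in\X^\bot\cap\C$ and $M\in\C$, show $\Hom_R(-,N)|_\X$ is finitely presented. Write $F_A=\Hom_R(-,A)|_\X$ for any module $A$. Applying $\Hom_R(X,-)$ for $X\in\X$ gives the exact sequence
$$0\to F_L(X)\to F_M(X)\to F_N(X)\to \Ext^1_R(X,L)=0,$$
where the vanishing holds because $L\in\X^\bot$. This sequence is natural in $X$, so we obtain an exact sequence $0\to F_L\to F_M\to F_N\to 0$ in $\Mod\X$. In particular $F_N$ is a cokernel of a morphism between finitely presented functors $F_L\to F_M$.

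The key step is the standard fact that $\mod\X$ is closed under cokernels in $\Mod\X$. I would prove it directly rather than quote it. Choose presentations
$$\Hom(-,X_1)\to\Hom(-,X_0)\to F_M\to 0 \quad\text{and}\quad \Hom(-,X_0')\to F_L.$$
The second map is an epimorphism, which exists since $F_L$ is finitely generated. By Yoneda, the composite $\Hom(-,X_0')\to F_L\to F_M$ lifts along the epimorphism $\Hom(-,X_0)\to F_M$, because representables are projective in $\Mod\X$. This gives a morphism $\Hom(-,X_0')\to\Hom(-,X_0)$. Then
$$\Hom(-,X_1\oplus X_0')\to\Hom(-,X_0)\to F_N\to 0$$
is exact. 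Indeed, the composite $\Hom(-,X_0)\to F_M\to F_N$ is surjective. An element of $\Hom(X,X_0)$ mapping to $0$ in $F_N(X)$ has image in $F_M(X)$ lying in $F_L(X)$. That image therefore lifts to $\Hom(X,X_0')$, and the difference lies in the image of $\Hom(X,X_1)$. Since $\X$ is additive, $X_1\oplus X_0'\in\X$, so $F_N$ is finitely presented and $N\in\C$.

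There is no real obstacle. The only point needing care is the naturality and exactness of the induced sequence of functors, which uses the hypothesis $L\in\X^\bot$ only through the vanishing of $\Ext^1_R(X,L)$. The diagram chase above is the routine core of the argument.
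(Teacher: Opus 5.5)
Your proof is correct and is the standard argument behind the cited \cite[Lemma 24]{T21} (the paper gives no proof of its own): the vanishing of $\Ext^1_R(X,L)$ for $X\in\X$ yields a short exact sequence $0\to F_L\to F_M\to F_N\to 0$ in $\Mod\X$, and finitely presented functors are closed under cokernels. Note that your diagram chase only uses that $F_L$ is finitely generated, so the hypothesis $L\in\C$ could be weakened to $L\in\rap\X$; this is exactly how the paper argues the ``if'' part of Corollary \ref{g}.
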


The exact sequence (\ref{3.3.1}) implies that $R/(x) \in \C$. Hence Lemma \ref{3.3} is refined as the following form.

\begin{lem}\label{h}
Let $R$ be a Gorenstein ring with $\dim R>0$. If $\X$ is contained in $\cm R$, then $\C$ contains an $R$-module outside $\X$.
\end{lem}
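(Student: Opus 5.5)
We prove Lemma \ref{h} by mimicking the proof of Lemma \ref{3.3} and replacing Lemma \ref{1} with Lemma \ref{a}. Since $R$ is Gorenstein with $d=\dim R>0$, we have $\depth R=d>0$, so we may pick an $R$-regular element $x\in\m$. This gives the exact sequence (\ref{3.3.1}),
$$0\to R\xrightarrow{x}R\to R/(x)\to 0.$$
The module $R/(x)$ has depth $d-1<d$, so it is not maximal Cohen--Macaulay. Since $\X\subseteq\cm R$, it follows that $R/(x)\notin\X$. It therefore suffices to show that $R/(x)\in\C$.

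To do this, we apply Lemma \ref{a} with $L=M=R$ and $N=R/(x)$, which requires checking two things.

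First, $R\in\C$. This holds because $R\in\X$, as $\X$ is resolving and so contains $\proj(\mod R)$, and because $\X\subseteq\C$. Concretely, $\Hom_R(-,R)|_\X$ is representable, so it is presented by $0\to\Hom_R(-,R)|_\X\to\Hom_R(-,R)|_\X$.

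Second, $R\in\X^{\bot}$. This holds because every $X\in\X$ is maximal Cohen--Macaulay over the Gorenstein ring $R$, so $\Ext^{>0}_R(X,R)=0$. One can also see this from the cotorsion pair $(\cm R)^{\bot}=\fpd R\ni R$ together with the inclusion $\X\subseteq\cm R$, which gives $(\cm R)^\bot\subseteq\X^\bot$.

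Hence $L=R\in\X^\bot\cap\C$ and $M=R\in\C$, and Lemma \ref{a} yields $R/(x)\in\C$. This exhibits a module in $\C$ lying outside $\X$.

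There is no real obstacle here. The only step needing care is the vanishing $\Ext^{>0}_R(X,R)=0$ for maximal Cohen--Macaulay $X$, which is the standard characterization of maximal Cohen--Macaulay modules over a Gorenstein local ring (local duality, or the cotorsion pair cited above). Everything else is a direct substitution into Lemma \ref{a}.
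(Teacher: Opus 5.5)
Your proposal is correct and is essentially the paper's own argument: the paper also applies Lemma \ref{a} to the sequence (\ref{3.3.1}), using $R\in\X^{\bot}\cap\C$, to get $R/(x)\in\C$, and $R/(x)\notin\X$ because it is not maximal Cohen--Macaulay. One small slip: the presentation of $\Hom_R(-,R)|_\X$ should read $0\to\Hom_R(-,R)|_\X\to\Hom_R(-,R)|_\X\to 0$, i.e.\ take $A_1=0$ and $A_0=R$; this does not affect the argument.
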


Now we have the following observation. 

\begin{prop}\label{?}
Let $C$ be an $R$-module in $\C$. Then there exists an exact sequence $0\to K\to X\to C\to0$ of $R$-modules with $X\in\X$ and $K\in\rap\X$.
\end{prop}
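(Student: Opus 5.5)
Since $C\in\C$, the functor $\Hom_R(-,C)|_{\X}$ is finitely presented. So there is an exact sequence in $\Mod\X$
$$\Hom_R(-,X_1)|_{\X}\xrightarrow{\ (g\circ-)\ }\Hom_R(-,X_0)|_{\X}\xrightarrow{\ (f\circ-)\ }\Hom_R(-,C)|_{\X}\to 0$$
with $X_0,X_1\in\X$. By Yoneda, the maps are induced by $R$-homomorphisms $f\colon X_0\to C$ and $g\colon X_1\to X_0$, with $fg=0$. The plan is to take $X$ to be $X_0$ (after a small modification making $f$ surjective) and $K=\ker f$. We then show that $g$, or rather the induced map $X_1\to K$, is a right $\X$-approximation of $K$. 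This gives $K\in\rap\X$.

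\textbf{Making $f$ surjective.} The map $f$ is a right $\X$-approximation of $C$. Since $R\in\X$, every map $R\to C$ factors through $f$, so $f$ is surjective; no modification is needed. Put $K=\ker f$, so we have an exact sequence $0\to K\xrightarrow{\iota} X_0\xrightarrow{f} C\to 0$ with $X_0\in\X$. Because $fg=0$, the map $g$ factors uniquely as $g=\iota h$ with $h\colon X_1\to K$.

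\textbf{Checking that $h$ is a right $\X$-approximation of $K$.} Let $X'\in\X$ and let $\varphi\colon X'\to K$ be a morphism. Then $f\circ(\iota\varphi)=0$, so $\iota\varphi$ lies in the kernel of $\Hom_R(X',X_0)\to\Hom_R(X',C)$. By exactness of the presentation evaluated at $X'$, this kernel equals the image of $g\circ-$. Hence there is $\psi\colon X'\to X_1$ with $\iota\varphi=g\psi=\iota h\psi$. Since $\iota$ is injective, $\varphi=h\psi$. So $h$ is a right $\X$-approximation, and $K\in\rap\X$.

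\textbf{Where care is needed.} The only point requiring care is the exactness of the functor sequence at the middle term. This is pointwise exactness at every $X'\in\X$, which is precisely what the finite presentation provides. It is also worth noting the routine check that the Yoneda identification turns the natural transformations into composition with $f$ and $g$. No depth or Gorenstein hypotheses are needed, so the argument is essentially formal.
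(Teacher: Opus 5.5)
Your proof is correct and takes essentially the paper's route. Both arguments set $K=\ker f$ for the Yoneda map $f\colon X_0\to C$ and show that $\Hom_R(-,K)|_{\X}$ is a quotient of $\Hom_R(-,X_1)|_{\X}$: the paper decomposes the functor sequence and cites Lemma~\ref{c}, whereas you check directly that $h\colon X_1\to K$ is a right $\X$-approximation, and you also make explicit the surjectivity of $f$ (from $R\in\X$), which the paper leaves implicit.
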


\begin{proof}
There exists an exact sequence
$$\Hom_R(-,X_1)|_{\X}\to\Hom_R(-,X_0)|_{\X} \xrightarrow{\phi}\Hom_R(-,C)|_{\X}\to 0$$
in $\mod \X$ with $X_0,X_1 \in \X$. By Yoneda's lemma, there exists an $R$-homomorphism $f:X_0\to C$ such that $\phi=\Hom_R(-,f)|_{\X}$. Decomposing this sequence, we get exact sequences 
\begin{align*}
    &\Hom_R(-,X_1)|_{\X}\to \Hom_R(-,K)|_{\X}\to0,\\
    &0\to\Hom_R(-,K)|_{\X}\to\Hom_R(-,X_0)|_{\X}\to\Hom_R(-,C)|_{\X}\to0
\end{align*}
in $\Mod \X$, where $K=\ker f$. Lemma \ref{c} implies $K\in\rap\X$. There exists an induced exact sequence $0\to K\to X_0\xrightarrow{f} C\to 0$ of $R$-modules.
\end{proof}

We will use the following proposition to obtain several properties of $\C$.

\begin{prop}\label{e}
If there exists an exact sequence $0\to N\to M\to C\to 0$ of $R$-modules with $M \in \rap \X$ and $C \in \C$, then $N \in \rap \X$.
\end{prop}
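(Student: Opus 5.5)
The plan is to replace the given sequence by one whose right-hand term lies in $\X$. Then the needed approximation of $N$ can be written down directly, with no further homological input.

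\emph{Step 1 (resolve $C$).} By Proposition \ref{?}, since $C\in\C$ there is an exact sequence $0\to K\to X\xrightarrow{p} C\to 0$ with $X\in\X$ and $K\in\rap\X$.

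\emph{Step 2 (pull back).} Let $\pi\colon M\to C$ be the given surjection, and form the pullback $E=M\times_C X$. This gives two exact sequences
\[
0\to N\to E\xrightarrow{q} X\to 0 \quad\text{and}\quad 0\to K\to E\to M\to 0 .
\]
Here $K$ and $M$ lie in $\rap\X$, and $\rap\X$ is closed under extensions by \cite[Proposition 23(1)]{T21}. Hence $E\in\rap\X$. The problem is now reduced to the following: given $0\to N\to E\xrightarrow{q} X\to 0$ with $E\in\rap\X$ and $X\in\X$, show that $N\in\rap\X$.

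\emph{Step 3 (construct the approximation).} Choose a right $\X$-approximation $a\colon X_E\to E$. Since $R\in\X$ and every module is a quotient of a free module, $a$ is surjective. Therefore the composite $qa\colon X_E\to X$ is an epimorphism between modules in $\X$. Its kernel $Z=\ker(qa)$ lies in $\X$, because $\X$ is closed under kernels of epimorphisms. The map $a$ sends $Z$ into $\ker q=N$, so we obtain a morphism $b\colon Z\to N$.

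\emph{Step 4 (check the approximation property).} Let $t\colon T\to N$ be any morphism with $T\in\X$. View $t$ as a map $T\to E$; since $a$ is an approximation, it lifts to some $s\colon T\to X_E$ with $as=t$. Then $qas=qt=0$, so $s$ factors through $Z$, and the induced map $T\to Z$ satisfies $bs=t$. Thus $b$ is a right $\X$-approximation of $N$, and $N\in\rap\X$.

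The main obstacle is Step 2, and specifically the observation that one should pass through Proposition \ref{?} and a pullback. The direct approach runs into trouble. It would try to show that the kernel of $\Hom_R(-,M)|_\X\to\Hom_R(-,C)|_\X$ is finitely generated by working with finitely presented functors in $\Mod\X$. That requires $\mod\X$ to be well behaved (for instance, $\X$ having weak kernels), which is not available here. The pullback trick avoids functor categories altogether. Once $E$ sits in an extension with right term in $\X$, Steps 3 and 4 are a routine diagram chase.
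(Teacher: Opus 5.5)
Your proof is correct. Steps 1 and 2 coincide with the paper's proof: resolve $C$ by $0\to K\to X\to C\to 0$ with $K\in\rap\X$, pull back, and use closure of $\rap\X$ under extensions to get $E\in\rap\X$.

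The two arguments diverge at the reduced problem $0\to N\to E\to X\to 0$ with $E\in\rap\X$ and $X\in\X$.

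\emph{The paper's finish.} It takes a second pullback, along a free cover $0\to\syz X\to F\to X\to 0$. This gives $0\to N\to U'\to F\to 0$, which splits because $F$ is free, and $0\to\syz X\to U'\to E\to 0$. Since $\syz X\in\X$, closure of $\rap\X$ under extensions, used a second time, gives $U'\in\rap\X$. Then $N\in\rap\X$ because it is a direct summand of $U'$.

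\emph{Your finish.} You write the approximation of $N$ down explicitly, as the restriction $b\colon Z=\ker(qa)\to N$ of an approximation $a$ of $E$. This uses only the definition of an approximation, the fact that $R\in\X$, and closure of $\X$ under kernels of epimorphisms. So it needs no hypothesis on $R$ and no second appeal to the cited closure properties of $\rap\X$, and it shows concretely what an approximation of $N$ is. The paper's finish, in exchange, stays inside the formal calculus of closure properties of $\rap\X$ (extensions and direct summands), which it reuses in the subsequent corollaries.

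One small notational slip in Step 4. The map satisfying the equation is the induced $s'\colon T\to Z$ with $s=js'$, where $j\colon Z\to X_E$ is the inclusion. The identity $bs'=t$ then follows from $\iota bs'=ajs'=as=\iota t$ and injectivity of $\iota\colon N\to E$.
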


\begin{proof}
We have an exact sequence $0\to K\to X\to C\to 0$ of $R$-modules with $X\in \X$ and $K\in\rap\X$ by Proposition \ref{?}. There is a pullback diagram.
$$
\begin{CD}
 @. @. 0 @. 0 @.\\
@. @. @AAA @AAA @.\\
0 @>>> N @>>> M @>>> C @>>> 0\\
@. @| @AAA @AAA @.\\
0 @>>> N @>>> U @>>> X @>>> 0\\
@. @. @AAA @AAA @.\\
 @. @. K @= K @.\\
@. @. @AAA @AAA @.\\
 @. @. 0 @. 0 @.
\end{CD}
$$
\noindent
As the $R$-modules $M$ and $K$ belong to $\rap\X$, so does $U$ by the middle column in the above diagram. There is an exact sequence $0 \to \syz X\to F\to X\to 0$ of $R$-modules with $F \in \add R$. We get the following pullback diagram.
$$
\begin{CD}
 @. @. 0 @. 0 @.\\
@. @. @AAA @AAA @.\\
0 @>>> N @>>> U @>>> X @>>> 0\\
@. @| @AAA @AAA @.\\
0 @>>> N @>>> U' @>>> F @>>> 0\\
@. @. @AAA @AAA @.\\
 @. @. \syz{X} @= \syz{X} @.\\
@. @. @AAA @AAA @.\\
 @. @. 0 @. 0 @.
\end{CD}
$$
Since $F$ is free, the middle row in the second diagram splits. Therefore $N$ is a direct summand of $U'$. As the $R$-modules $U$ and $\syz X$ belong to $\rap \X$, so does $U'$ by the middle column in the second diagram. Hence $N$ belongs to $\rap \X$.
\end{proof}

The following corollary is similar to Lemma \ref{1}, which will be used to ensure that there exists some nonzero $R$-module that belongs to both $\X$ and $\X^{\bot}$ if there exists an $R$-module in $\C$ outside $\X$.

\begin{cor}\label{g}
An $R$-module $C$ belongs to $\C$ if and only if there exists an exact sequence $0\to Y\to X\to C\to 0$ of $R$-modules such that $X$ belongs to $\X$ and $Y$ belongs to both $\rap\X$ and $\X^{\bot}$. When these equivalent conditions hold, there exists an exact sequence $X_1\to X_0\to C\to0$ of $R$-modules with $X_0\in\X$ and $X_1\in\X\cap\X^\perp$ which induces an exact sequence
$$
\Hom_R(-,X_1)|_\X\to\Hom_R(-,X_0)|_\X\to\Hom_R(-,C)|_\X\to0
$$
in $\mod\X$.
\end{cor}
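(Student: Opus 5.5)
The plan is to prove both directions directly, using Wakamatsu's lemma for minimal right approximations (available since $R$ is henselian) for the forward implication, and Lemma \ref{1} together with the vanishing of $\Ext^1$ for the converse.

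\emph{Only if.} Let $C\in\C$.
\begin{enumerate}[label=\textnormal{(\alph*)}]
\item As in the proof of Proposition \ref{?}, choose a presentation
$$\Hom_R(-,X_1)|_\X\to\Hom_R(-,X_0)|_\X\xrightarrow{\phi}\Hom_R(-,C)|_\X\to0$$
with $\phi=\Hom_R(-,f)|_\X$. Surjectivity of $\phi$ says exactly that $f\colon X_0\to C$ is a right $\X$-approximation.
\item Since $R\in\X$, the map $f$ is surjective. Proposition \ref{?} gives $K=\ker f\in\rap\X$.
\item Since $R$ is henselian, $\mod R$ is Krull--Schmidt, so $f$ has a minimal right $\X$-approximation as a direct summand. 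Concretely, $X_0=X\oplus X''$ with $f=(f',0)$ and $f'\colon X\to C$ right minimal. It is again surjective, and $K\cong Y\oplus X''$ where $Y=\ker f'$. As $\rap\X$ is closed under direct summands, $Y\in\rap\X$.
\item By Wakamatsu's lemma, which applies because $\X$ is closed under extensions, $\Ext^1_R(X',Y)=0$ for all $X'\in\X$.
\item Since $\X$ is closed under syzygies, $\Ext^i_R(X',Y)\cong\Ext^1_R(\syz^{i-1}X',Y)=0$ for all $i\ge1$. Hence $Y\in\X^\bot$, and $0\to Y\to X\to C\to0$ is the required sequence.
\end{enumerate}
This is the same mechanism that underlies Lemma \ref{1}, and checking the Wakamatsu step is the only delicate point.

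\emph{If, and the final assertion.} Let $0\to Y\to X_0\to C\to0$ be exact with $X_0\in\X$ and $Y\in\rap\X\cap\X^\bot$.
\begin{enumerate}[label=\textnormal{(\alph*)}]
\item Since $Y\in\rap\X$ and $R$ is henselian, Lemma \ref{1} gives an exact sequence $0\to Y'\to X_1\xrightarrow{g}Y\to0$ with $X_1\in\X$ and $Y'\in\X^\bot$. As $\X^\bot$ is closed under extensions (long exact sequence of $\Ext$), $X_1\in\X\cap\X^\bot$.
\item Splicing gives an exact sequence $X_1\to X_0\to C\to0$.
\item For $X'\in\X$ we have $\Ext^1_R(X',Y)=0$, so $0\to\Hom_R(X',Y)\to\Hom_R(X',X_0)\to\Hom_R(X',C)\to0$ is exact. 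Moreover $\Hom_R(X',X_1)\to\Hom_R(X',Y)$ is surjective because $\Ext^1_R(X',Y')=0$ (equivalently, because $g$ is a right $\X$-approximation).
\item Composing, we get an exact sequence
$$\Hom_R(-,X_1)|_\X\to\Hom_R(-,X_0)|_\X\to\Hom_R(-,C)|_\X\to0$$
in $\Mod\X$. This shows $C\in\C$ and simultaneously gives the stated presentation with $X_1\in\X\cap\X^\bot$.
\end{enumerate}
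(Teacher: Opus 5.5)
Your proof is correct. The converse direction, including getting $X_1\in\X\cap\X^\perp$ from Lemma \ref{1} and splicing the two $\Hom$ sequences, is exactly the paper's argument. The forward direction, however, takes a genuinely different route.

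The paper just combines two earlier results. Since $C\in\C\subseteq\rap\X$, Lemma \ref{1} supplies some sequence $0\to Y\to X\to C\to0$ with $X\in\X$ and $Y\in\X^\perp$. Proposition \ref{e} then shows $Y\in\rap\X$; it is proved by two pullback diagrams, using Proposition \ref{?} and the closure of $\rap\X$ under extensions and direct summands.

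You instead start from the particular approximation $f$ coming from the finite presentation, whose kernel lies in $\rap\X$ by Lemma \ref{c}. You pass to its right minimal part, whose kernel is a direct summand and hence still in $\rap\X$. Wakamatsu's lemma plus dimension shifting then put that kernel in $\X^\perp$. In effect you reopen the proof of Lemma \ref{1} and run it on a well-chosen approximation. It uses the same henselian/Krull--Schmidt input, so no extra hypothesis is involved.

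What each route buys: yours is more self-contained and needs only closure of $\rap\X$ under direct summands, at the cost of invoking minimal approximations and Wakamatsu's lemma explicitly. The paper's route yields the stronger, reusable Proposition \ref{e}: the kernel of \emph{any} epimorphism from a module of $\rap\X$ onto a module of $\C$ lies in $\rap\X$, so every sequence produced by Lemma \ref{1} works. That proposition is what later drives Corollary \ref{NCM} and the implication (4) $\Rightarrow$ (1) in Proposition \ref{aaa}.
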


\begin{proof}
The ``only if'' part follows from Lemma \ref{1} and Proposition \ref{e}. It suffices to show the ``if'' part. Since  $Y$ belongs to $\rap\X$, there exists an exact sequence $0\to Y'\to X'\to Y\to0$ of $R$-modules such that $X'\in\X$ and $Y\in\X^\perp$. Then $X'$ belongs to $\X\cap\X^\perp$ and the induced sequence $\Hom_R(-,X')|_{\X}\to \Hom_R(-,Y)|_{\X}\to 0$ in $\Mod\X$ is exact. On the other hand, we have an induced exact sequence $0\to \Hom_R(-,Y)|_{\X}\to \Hom_R(-,X)|_{\X}\to \Hom_R(-,C)|_{\X}\to 0$ in $\Mod\X$. Combining these two sequences, we obtain an exact sequence $\Hom_R(-,X')|_{\X}\to \Hom_R(-,X)|_{\X}\to \Hom_R(-,C)|_{\X}\to 0$ in $\mod\X$. Hence $C$ belongs to $\C$. 
\end{proof}

\begin{rem}
If there exists an exact sequence $\cdots\to \Hom_R(-,Y_2)|_\X\to \Hom_R(-,Y_1)|_\X\to \Hom_R(-,Y_0)|_\X\to \Hom_R(-,C)|_\X\to 0$ in $\mod\X$ with $Y_i\in\X$ for all $i$, then we similarly obtain an exact sequence
$$\cdots \to \Hom_R(-,X_2)|_{\X} \to \Hom_R(-,X_1)|_{\X} \to \Hom_R(-,X_0)|_{\X} \to \Hom_R(-,C)|_{\X} \to 0$$
in $\mod\X$ with $X_0 \in \X$ and $X_i \in \X \cap \X^{\bot}$  for all $i>0$.
\end{rem}

The following corollary indicates how close $\C$ and $\rap\X$ are.

\begin{cor}\label{NCM}
    Let $0\to N\to C\to M\to 0$ be an exact sequence of $R$-modules. If $C\in \C$ and $M,N\in \rap\X$, then $M\in\C$.
\end{cor}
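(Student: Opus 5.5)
We have $0\to N\to C\to M\to 0$ with $C\in\C$ and $M,N\in\rap\X$, and we want $M\in\C$. The plan is to verify the criterion of Corollary \ref{g}: find an exact sequence $0\to Y\to X\to M\to 0$ with $X\in\X$ and $Y\in\rap\X\cap\X^\perp$. We build it from the corresponding sequence for $C$ together with an approximation sequence for $N$.

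\emph{Step 1: the sequence for $C$ and the candidate kernel.} Since $C\in\C$, Corollary \ref{g} gives an exact sequence $0\to Y_C\to X\to C\to 0$ with $X\in\X$ and $Y_C\in\rap\X\cap\X^\perp$. Compose with $C\to M$ to get a surjection $X\to M$, and let $Y$ be its kernel. The snake lemma (or a pullback diagram like those in Proposition \ref{e}) gives an exact sequence
$$0\to Y_C\to Y\to N\to 0.$$

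\emph{Step 2: $Y\in\rap\X$.} Both $Y_C$ and $N$ lie in $\rap\X$, and $\rap\X$ is closed under extensions by \cite[Proposition 23(1)]{T21}. Hence $Y\in\rap\X$.

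\emph{Step 3: $Y\in\X^\perp$, the main obstacle.} Here $N$ need not lie in $\X^\perp$, so $Y$ need not either, and we must modify the sequence. First take $0\to Y_N\to X_N\to N\to 0$ from Lemma \ref{1}, with $X_N\in\X$ and $Y_N\in\X^\perp$. Note $Y_N\in\rap\X$: this follows from Proposition \ref{e} once $N\in\C$ is known, but we cannot assume that. So we pull back instead. Form the pullback $W$ of $Y\to N\leftarrow X_N$. This gives exact sequences
$$0\to Y_C\to W\to X_N\to 0, \qquad 0\to Y_N\to W\to Y\to 0.$$
Then replace $X$ by $X'=X\oplus X_N$. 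Because $X_N\to N$ lifts through $Y\to N$ after passing to $W$, we obtain a surjection $X'\to M$ whose kernel $Y'$ sits in an exact sequence
$$0\to Y_C\to Y'\to Y_N\to 0.$$
Concretely, take the map $X\oplus X_N\to M$ that is the given map on $X$ and zero on $X_N$, then adjust by an automorphism built from $X_N\to W\to Y\subseteq X$; it is cleaner to argue directly with the snake lemma on the map of sequences. The sequence $0\to Y_C\to Y'\to Y_N\to 0$ makes $Y'$ an extension of two modules in $\X^\perp$, so $Y'\in\X^\perp$.

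For $Y'\in\rap\X$ we still need $Y_N\in\rap\X$. To get it, apply Proposition \ref{e} to the sequence $0\to Y_N\to X_N\to N\to 0$ after first arranging $N$-side data; failing that, try a more careful choice of $X_N$ coming from the approximation of $Y$. I expect this bookkeeping to be the crux.

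Once $Y'\in\rap\X\cap\X^\perp$ is established, Corollary \ref{g} gives $M\in\C$.
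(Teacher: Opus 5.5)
Your Steps 1 and 2 are correct, but Step 3 is a genuine gap, and it cannot be closed along the lines you sketch. First, the module $Y'$ is never actually constructed. Precomposing $(\pi,0)\colon X\oplus X_N\to M$ with an automorphism of $X\oplus X_N$ does not change the kernel up to isomorphism, so you get $Y\oplus X_N$, not an extension of $Y_N$ by $Y_C$. Second, even granting such a $Y'$, you would still need $Y_N\in\rap\X$. The only available tool, Proposition \ref{e}, requires the right-hand term of the sequence to lie in $\C$. You try to apply it to $0\to Y_N\to X_N\to N\to 0$, but $N\in\C$ is not a hypothesis. The only module known to lie in $\C$ is $C$, so Proposition \ref{e} must be applied to a sequence ending in $C$.

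That is exactly what the paper does, starting from $M$ rather than from $C$. Take $0\to Y\to X\to M\to 0$ from Lemma \ref{1}, so that $Y\in\X^\perp$ comes for free, and form the pullback $V$ of $X\to M\leftarrow C$. This gives two sequences. From $0\to N\to V\to X\to 0$ you get $V\in\rap\X$ by extension-closure. From $0\to Y\to V\to C\to 0$ you get $Y\in\rap\X$ by Proposition \ref{e}. Corollary \ref{g} then finishes.

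Alternatively, your Steps 1--2 can be salvaged if you drop the insistence on $\X^\perp$. The ``if'' direction of Corollary \ref{g} uses $\X^\perp$ only to make the surjection a right $\X$-approximation. So what you really need is a surjective right $\X$-approximation of $M$ whose kernel lies in $\rap\X$. Add a right $\X$-approximation $X_M\to M$ to your surjection $X\to M$. This gives a surjective right approximation $X\oplus X_M\to M$ whose kernel $Y'$ fits in $0\to Y\to Y'\to X_M\to 0$, so $Y'\in\rap\X$ by your Step 2. Then $0\to\Hom_R(-,Y')|_\X\to\Hom_R(-,X\oplus X_M)|_\X\to\Hom_R(-,M)|_\X\to 0$ is exact, and Lemma \ref{c} gives finite presentation.
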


\begin{proof}
We have an exact sequence $0\to Y\to X\to M\to 0$ of $R$-modules with $X\in\X$ and $Y\in\X^\bot$ by Lemma \ref{1}. There is a pullback diagram.
$$
\begin{CD}
 @. @. 0 @. 0 @.\\
@. @. @VVV @VVV @.\\
 @. @. N @= N @.\\
@. @. @VVV @VVV @.\\
0 @>>> Y @>>> V @>>> C @>>> 0\\
@. @| @VVV @VVV @.\\
0 @>>> Y @>>> X @>>> M @>>> 0\\
@. @. @VVV @VVV @.\\
 @. @. 0 @. 0 @.
\end{CD}
$$
As the $R$-modules $N$ and $X$ belong to $\rap\X$, so does $V$ by the middle column in the diagram. Since the $R$-module $C$ is in $\C$, Proposition \ref{e} implies that $Y$ belongs to $\rap\X$ by the middle row in the diagram. Hence $M$ belongs to $\C$ by Corollary \ref{g}.
\end{proof}

It turns out that $\C$ has the following closedness properties, as does $\rap\X$.

\begin{cor}
    The subcategory $\C$ of $\mod R$ is closed under direct summands and extensions.
\end{cor}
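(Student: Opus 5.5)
Both properties will be read off from the results just established, using that $\rap\X$ is closed under direct summands and extensions (\cite[Proposition 23(1)]{T21}).

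\emph{Direct summands.} Let $C=C_1\oplus C_2\in\C$. The split exact sequence $0\to C_2\to C\to C_1\to 0$ has $C\in\C$. Since $C\in\C\subseteq\rap\X$ and $\rap\X$ is closed under direct summands, both $C_1$ and $C_2$ lie in $\rap\X$. Corollary \ref{NCM}, applied to this sequence with $N=C_2$ and $M=C_1$, then gives $C_1\in\C$. Swapping the roles of the summands gives $C_2\in\C$.

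An alternative route works directly in $\Mod\X$: $\Hom_R(-,C_1)|_\X$ is a direct summand of the finitely presented functor $\Hom_R(-,C)|_\X$, and direct summands of finitely presented functors are finitely presented. I prefer the first argument because it stays within the paper's tools.

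\emph{Extensions.} Let $0\to C_1\to C\to C_2\to 0$ be exact with $C_1,C_2\in\C$. Then $C\in\rap\X$, since $\rap\X$ is closed under extensions. I will invoke Corollary \ref{g} to obtain exact sequences $0\to Y_i\to X_i\to C_i\to 0$ with $X_i\in\X$ and $Y_i\in\rap\X\cap\X^\perp$. Because $\Ext^1_R(X_2,Y_1)=0$, the epimorphism $X_2\to C_2$ lifts to a map $X_2\to C$. This is the horseshoe step: combining that lift with $X_1\to C_1\to C$ gives a surjection $X_1\oplus X_2\to C$. Its kernel $Y$ fits, by the snake lemma, into an exact sequence $0\to Y_1\to Y\to Y_2\to 0$. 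Since $\X^\perp$ and $\rap\X$ are both closed under extensions, $Y\in\rap\X\cap\X^\perp$. The module $X_1\oplus X_2$ lies in $\X$, so Corollary \ref{g} yields $C\in\C$.

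The only point needing care is the lifting in the horseshoe construction. It requires $\Ext^1_R(X_2,Y_1)=0$, which holds because $Y_1\in\X^\perp$. Everything else is formal.
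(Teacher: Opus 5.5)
Your direct-summand argument is exactly the paper's: the split sequence, closure of $\rap\X$ under summands, then Corollary \ref{NCM}. Your plan for extensions, Corollary \ref{g} combined with the argument of \cite[Proposition 23(1)]{T21}, is also what the paper does. The extension part nevertheless has a genuine gap at its key step. Whether $X_2\to C_2$ lifts through $C\to C_2$ is governed by the pullback of the class of $0\to C_1\to C\to C_2\to 0$ in $\Ext^1_R(X_2,C_1)$, not by $\Ext^1_R(X_2,Y_1)$. The vanishing $\Ext^1_R(X_2,Y_1)=0$ only says that maps $X_2\to C_1$ factor through $X_1$.

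In fact, since $Y_1\in\X^\perp$, one has $\Ext^1_R(X_2,C_1)\cong\Ext^1_R(X_2,X_1)$, which is usually nonzero. Concretely, take $R=k[[t]]$ and $\X=\mod R$. Then $\X^\perp=0$, so Corollary \ref{g} forces $Y_i=0$ and $X_i=C_i$. For the nonsplit sequence $0\to k\to R/(t^2)\to k\to 0$, no map $k\to R/(t^2)$ lifts the identity of $k$. There is also no surjection $k\oplus k\to R/(t^2)$, because $t$ kills $k\oplus k$ but not $R/(t^2)$. The classical horseshoe lemma needs $X_2$ projective at this point, and here it is not.

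The repair is to replace $X_1\oplus X_2$ by a suitable extension of $X_2$ by $X_1$.
\begin{enumerate}
\item Pull back $0\to C_1\to C\to C_2\to 0$ along $X_2\to C_2$. This gives $0\to C_1\to E\to X_2\to 0$ together with $0\to Y_2\to E\to C\to 0$.
\item Since $\Ext^2_R(X_2,Y_1)=0$, the map $\Ext^1_R(X_2,X_1)\to\Ext^1_R(X_2,C_1)$ is surjective. So the class of $E$ is the pushout of some $0\to X_1\to X\to X_2\to 0$ along $X_1\to C_1$.
\item Then $X\in\X$ by extension-closure. By the snake lemma, the induced map $X\to E$ is surjective with kernel $Y_1$.
\item The composite $X\to E\to C$ is therefore surjective, with kernel $Y$ fitting into $0\to Y_1\to Y\to Y_2\to 0$.
\end{enumerate}
From here your last step goes through unchanged: $Y\in\rap\X\cap\X^\perp$ by extension-closure of both subcategories, and Corollary \ref{g} gives $C\in\C$.
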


\begin{proof}
If $M\oplus N$ belongs to $\C$, then $M$ and $N$ are in $\rap\X$ since $\C$ is contained in $\rap\X$. Corollary \ref{NCM} implies that $M$ and $N$ belong to $\C$. Hence $\C$ is closed under direct summands. We observe from Corollary \ref{g} and the same argument as in the proof of \cite[Proposition 23(1)]{T21} that $\C$ is closed under extensions.
\end{proof}

Now we consider the {\em generalized Auslander--Reiten condition}.
\vspace{5pt}
\begin{center}
\begin{tabular}{@{}l l@{}}
(\textbf{GARC}) & If for each $R$-module $M$ there exists a nonnegative integer $n$ such that\\
& $\Ext_R^{>n}(M,M \oplus R) =0$, then the projective dimension of $M$ is $n$ or less.
\end{tabular}
\end{center}
\vspace{5pt}
AB rings satisfy (\textbf{GARC}), see \cite[Corollary 2.8]{Ar19}. The following proposition may be viewed as a generalization of Theorem \ref{2}.

\begin{prop}\label{x}
Let $R$ be a Gorenstein ring with $\dim R>0$. Assume that $R$ satisfies \textnormal{(\textbf{GARC})}. Then $\X$ is strictly contained in $\C$ if and only if $\X$ is contained in $\cm R$.
\end{prop}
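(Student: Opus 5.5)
For the ``if'' direction I would simply invoke Lemma \ref{h}. If $\X\subseteq\cm R$ and $\dim R>0$, then $\C$ contains an $R$-module outside $\X$. Since $\X\subseteq\C$ always holds, $\X$ is strictly contained in $\C$. Note that (\textbf{GARC}) is not needed here.

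For the ``only if'' direction, let $C\in\C$ with $C\notin\X$.

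First I would produce a nonzero module in $\X\cap\X^\perp$. By Corollary \ref{g} there is an exact sequence $0\to Y\to X\to C\to 0$ with $X\in\X$ and $Y\in\rap\X\cap\X^\perp$. We have $Y\neq0$, since otherwise $C\cong X\in\X$. As $Y\in\rap\X$, Lemma \ref{1} gives an exact sequence $0\to Y'\to X'\to Y\to 0$ with $X'\in\X$ and $Y'\in\X^\perp$. The subcategory $\X^\perp$ is closed under extensions by the long exact Ext sequence, so $X'\in\X\cap\X^\perp$. Moreover $X'\neq0$, because $Y\neq0$. Put $M=X'$.

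Next I would show that $M$ has finite projective dimension. Since $M\in\X\cap\X^\perp$, we get $\Ext^{>0}_R(M,M)=0$. Since $R$ is Gorenstein of dimension $d$, we get $\Ext^{>d}_R(M,R)=0$. Hence $\Ext^{>d}_R(M,M\oplus R)=0$, and (\textbf{GARC}) gives $\pd M\le d<\infty$. As $R$ is Gorenstein, $M$ then also has finite injective dimension.

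Finally I would show that every $Z\in\X$ is maximal Cohen--Macaulay. Let $Z\in\X$ be nonzero. Since $M\in\X^\perp$, we have $\Ext^{>0}_R(Z,M)=0$. I would then apply Ischebeck's formula \cite{Isc69}: for a nonzero module $N$ of finite injective dimension and any nonzero $Z$,
$$\sup\{i\mid \Ext^i_R(Z,N)\neq0\}=\depth R-\depth Z.$$
Taking $N=M$, the left-hand side is $0$, so $\depth Z=\depth R$, and $Z$ is maximal Cohen--Macaulay. Thus $\X\subseteq\cm R$.

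The main obstacle is this last step, which converts Ext-vanishing against a single nonzero module of finite projective dimension into the maximal Cohen--Macaulay property. If the exact form of Ischebeck's formula is not available, I would fall back on the cotorsion pair $(\cm R,\fpd R)$. Here one has to be careful, because a single module $M$ does not a priori cogenerate all of $\fpd R$. So the reduction would go through a minimal finite injective resolution of $M$, using the nonvanishing of $\Ext^{d-\depth Z}_R(Z,E(k))$-type terms, i.e. essentially reproving the depth formula.
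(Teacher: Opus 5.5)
Your proof is correct, and your ``if'' direction is the paper's. For ``only if'' you take a different route at the last step.

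Your construction of a nonzero $M\in\X\cap\X^\perp$ is what Corollary \ref{g} provides, and you use (\textbf{GARC}) the same way the paper does to get $\pd M\le d$. The paper then invokes Lemma \ref{3.2}, which applies because $\X\subsetneq\C\subseteq\rap\X$ makes $\X$ not contravariantly infinite. The Nakayama argument there shows that every module in $\X\cap\fpd R$ is free, so $R$ is a summand of $M$ and lies in $\X^\perp$. Since $\X^\perp$ is closed under extensions and cokernels of monomorphisms, this gives $\fpd R\subseteq\X^\perp$. Hence
\[
\X\subseteq{}^\perp(\X^\perp)\subseteq{}^\perp(\fpd R)=\cm R .
\]

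You skip the freeness step and apply Ischebeck's formula directly to $M$, which has finite injective dimension because $R$ is Gorenstein. The formula is a standard statement (e.g.\ Bruns--Herzog, Exercise 3.1.24), so no fallback is needed. You rightly noted that a single $M$ need not cogenerate $\fpd R$; the paper handles exactly this by proving $M$ is free.

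What each route buys: yours is shorter and shows more generally that one nonzero module of finite injective dimension in $\X^\perp$ forces $\X\subseteq\cm R$, with no Nakayama or contravariant-finiteness input at that step. The paper's route uses only the cotorsion pair $(\cm R,\fpd R)$ as a black box, and yields extra structural information: the nonzero modules in $\X\cap\X^\perp$ are free, so $\fpd R\subseteq\X^\perp$. The underlying input is essentially the same, since the cotorsion pair is itself a consequence of Ischebeck's work \cite{Isc69}.
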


\begin{proof}
The ``if'' part is a consequence of Lemma \ref{h}. In what follows, we show the ``only if'' part. By Corollary \ref{g}, there exists a nonzero $R$-module $X$ that belongs to both $\X$ and $\X^{\bot}$. The condition (\textbf{GARC}) and Lemma \ref{3.2} imply that $X$ is in  $\add R$. Since $R$ is local and $\X^{\bot}$ is closed under direct summands, $R$ is a direct summand of $X$ and belongs to $\X^{\bot}$. Furthermore, $\X^{\bot}$ is closed under extensions and cokernels of monomorphisms, so that $\fpd R$ is contained in $\X^{\bot}$. We observe that ${}^{\bot}(\fpd R)$ contains ${}^{\bot}(\X^{\bot})$. It is clear that $\X$ is contained in $^{\bot}(\X^{\bot})$. Since $\cm R$ coincides with ${}^\bot (\fpd R)$, $\X$ is contained in $\cm R$.
\end{proof}

Finally, we consider the condition that $\C = \rap \X$. A resolving subcategory $\X$ satisfies this condition if and only if for each $R$-module $M$ the functor $\Hom_R(-,M)|_\X$ is finitely presented whenever it is finitely generated. Thus we name such a resolving subcategory as follows.

\begin{dfn}
We say that $\X$ is {\em coherent} if $\C$ coincides with $\rap \X$.
\end{dfn}

We do not know any example of a non-coherent resolving subcategory. We can relate the coherence of a resolving subcategory with several other conditions as follows.

\begin{prop}\label{aaa}
Consider the following three conditions.
\begin{enumerate}[label=\textnormal{(\arabic*)}]
\item
The resolving subcategory $\X$ is coherent.
\item
Every module in $\X^\perp$ admits a right $\X$-approximation.
\item
The subcategory $\X^\perp$ is closed under syzygies.
\item
The subcategory $\rap\X$ is closed under kernels of epimorphisms.
\end{enumerate}
Then the implications \textnormal{(4)} $\Leftrightarrow$ \textnormal{(1)} $\Leftarrow$ \textnormal{(2)} $\Leftarrow$ \textnormal{(3)} hold.
\end{prop}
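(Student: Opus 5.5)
We prove the three implications in turn, relying on Lemma \ref{1}, Proposition \ref{e}, Corollary \ref{g} and Corollary \ref{NCM}.

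\textbf{(1) $\Rightarrow$ (4).} Let $0\to N\to M\to L\to 0$ be exact with $M,L\in\rap\X$. By coherence $L\in\C$, so Proposition \ref{e} gives $N\in\rap\X$.

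\textbf{(4) $\Rightarrow$ (1).} Let $M\in\rap\X$. Lemma \ref{1} provides an exact sequence $0\to Y\to X\to M\to 0$ with $X\in\X$ and $Y\in\X^\perp$. Since $X\in\X\subseteq\rap\X$ and $M\in\rap\X$, condition (4) gives $Y\in\rap\X$. Corollary \ref{g} then yields $M\in\C$, so $\C=\rap\X$.

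\textbf{(2) $\Rightarrow$ (1).} The same sequence works. Here $Y\in\X^\perp$, so (2) gives $Y\in\rap\X$ directly, and Corollary \ref{g} again gives $M\in\C$.

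\textbf{(3) $\Rightarrow$ (2).} Let $Y\in\X^\perp$, and take a free cover $0\to\syz Y\to F\to Y\to 0$. By (3), $\syz Y\in\X^\perp$. Since $F\in\add R\subseteq\X$, Lemma \ref{1} applied to this sequence shows $Y\in\rap\X$. Note that the definition of ``closed under syzygies'' also requires $R\in\X^\perp$; this is not needed for the argument above.

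The only point needing care is the use of Lemma \ref{1}: it requires $R$ to be henselian, which holds under the standing Notation, and it needs only the existence of a short exact sequence with middle term in $\X$ and kernel in $\X^\perp$, which is exactly what we supply. With that checked, the implications (4) $\Leftrightarrow$ (1) $\Leftarrow$ (2) $\Leftarrow$ (3) all hold, and each step is routine.
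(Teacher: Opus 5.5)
Your proof is correct and agrees with the paper's for (1) $\Rightarrow$ (4) (via Proposition \ref{e}), (2) $\Rightarrow$ (1) (via Lemma \ref{1} and Corollary \ref{g}), and (3) $\Rightarrow$ (2) (via Lemma \ref{1} applied to a free cover). The only difference is (4) $\Rightarrow$ (1): the paper cites \cite[Proposition 23(5)]{T21}, whereas you give a self-contained argument with the same shape as (2) $\Rightarrow$ (1), using only the \emph{if} direction of Corollary \ref{g}, which does not depend on Proposition \ref{e}, so there is no circularity.
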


\begin{proof}
(1) $\Leftrightarrow$ (4): The ``only if'' part follows from \cite[Proposition 23(5)]{T21}, while the ``if'' part is shown by Proposition \ref{e}.

(2) $\Rightarrow$ (1): Let $M$ be an $R$-module in $\rap \X$. By Lemma \ref{1}, we obtain an exact sequence $0 \to Y\to X\to M\to 0$ with $X \in \X$ and $Y \in \X^{\bot}$. By assumption, the $R$-module $Y$ is in $\rap \X$. Hence $M$ belongs to $\C$ thanks to Corollary \ref{g}.

(3) $\Rightarrow$ (2): Let $Y$ be an $R$-module in $\X^{\bot}$. Consider the exact sequence $0 \to \Omega Y\to F\to Y\to 0$ with $F\in\add R$. By assumption, the $R$-module $\Omega Y$ belongs to $\X^{\bot}$. Hence $Y$ belongs to $\rap \X$ by Lemma \ref{1}.
\end{proof}

The implication (4) $\Rightarrow$ (1) in Proposition \ref{aaa} immediately gives rise to the following result. 

\begin{cor}
Let $\X$ be a resolving subcategory of $\mod R$. Suppose that $\X$ is either contravariantly finite or contravariantly infinite. Then $\X$ is coherent. 
\end{cor}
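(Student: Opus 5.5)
We verify condition (4) of Proposition \ref{aaa}, namely that $\rap\X$ is closed under kernels of epimorphisms; then the implication (4) $\Rightarrow$ (1) gives that $\X$ is coherent. We split into the two cases of the hypothesis.

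\emph{First case: $\X$ is contravariantly finite.} Then $\rap\X=\mod R$ by definition. The whole category $\mod R$ is trivially closed under kernels of epimorphisms, so (4) holds.

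\emph{Second case: $\X$ is contravariantly infinite.} By definition no module outside $\X$ admits a right $\X$-approximation. Since $\X\subseteq\rap\X$ always holds, we get $\rap\X=\X$. Now $\X$ is resolving, so it is closed under kernels of epimorphisms, and hence so is $\rap\X$. Thus (4) holds here as well.

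In both cases condition (4) of Proposition \ref{aaa} is satisfied, and that proposition yields $\C=\rap\X$, that is, $\X$ is coherent. One can also see the conclusion directly. In the first case the claim is literally the equivalence (4) $\Leftrightarrow$ (1). In the second case the chain $\X\subseteq\C\subseteq\rap\X=\X$ already forces $\C=\rap\X$.

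The argument has no real obstacle. The only care needed is to use the identifications $\rap\X=\mod R$ and $\rap\X=\X$ correctly, and to note that closure under kernels of epimorphisms is part of the definition of a resolving subcategory.
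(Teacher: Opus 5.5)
Your proposal is correct and matches the paper's proof, which derives the corollary directly from the implication (4) $\Rightarrow$ (1) of Proposition \ref{aaa}. As you do, it uses that $\rap\X=\mod R$ in the contravariantly finite case, and that $\rap\X=\X$ in the contravariantly infinite case, where $\X$ is resolving and hence closed under kernels of epimorphisms.
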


We recall the condition (\textbf{A}) in the sense of Mifune \cite[Definition 3.8]{M25}. Let $\A$ be an abelian category and $\E$ a subcategory of $\A$.
\vspace{5pt}
\begin{center}
\begin{tabular}{@{}l l@{}}
(\textbf{A}) & For every object of $X\in\E$ there exists an exact sequence $0\to X\to Y\to X'\to 0$\\
  & in $\A$ such that $X'$ belongs to $\E$ and $Y$ belongs to both $\E$ and $\E^{\bot}$.
\end{tabular}
\end{center}
\vspace{5pt}

\begin{thm}\label{bbb}
Assume that $R$ is Gorenstein and satisfies \textnormal{(\textbf{ARC})}. Suppose that $\X$ is contained in $\cm R$ and satisfies \textnormal{(\textbf{A})}. Then $\X$ is coherent. 
\end{thm}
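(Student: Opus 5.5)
The plan is to verify condition (3) of Proposition \ref{aaa}, namely that $\X^\perp$ is closed under syzygies, and then use the implications (3) $\Rightarrow$ (2) $\Rightarrow$ (1) of that proposition to conclude that $\X$ is coherent.

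First, $R$ belongs to $\X^\perp$. Indeed, $R$ is Gorenstein and every module of $\X$ is maximal Cohen--Macaulay, so $\Ext^{>0}_R(X,R)=0$ for all $X\in\X$. Consequently $\X^\perp$ contains $\add R$.

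Now fix $Y\in\X^\perp$ and an exact sequence $0\to\syz Y\to F\to Y\to0$ with $F$ free. We must show $\Ext^i_R(X,\syz Y)=0$ for all $X\in\X$ and $i>0$.
\begin{itemize}
\item For $i\ge2$, apply $\Hom_R(X,-)$ to the sequence. Since $\Ext^{>0}_R(X,F)=0$, we get $\Ext^i_R(X,\syz Y)\cong\Ext^{i-1}_R(X,Y)$, which is zero because $Y\in\X^\perp$.
\item For $i=1$, the long exact sequence identifies $\Ext^1_R(X,\syz Y)$ with the cokernel of $\Hom_R(X,F)\to\Hom_R(X,Y)$. So it suffices to show that every homomorphism $g\colon X\to Y$ factors through a free module.
\end{itemize}

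The factorization in the case $i=1$ is the main step, and it is where condition (\textbf{A}) and (\textbf{ARC}) enter.
\begin{itemize}
\item By (\textbf{A}), there is an exact sequence $0\to X\xrightarrow{u} Z\to X'\to0$ with $X'\in\X$ and $Z\in\X\cap\X^\perp$.
\item Since $Z\in\X^\perp$ and $Z\in\X$, we have $\Ext^{>0}_R(Z,Z)=0$.
\item Since $Z\in\X\subseteq\cm R$ and $R$ is Gorenstein, we also have $\Ext^{>0}_R(Z,R)=0$.
\item Hence $\Ext^{>0}_R(Z,Z\oplus R)=0$, and (\textbf{ARC}) forces $Z$ to be free.
\item Applying $\Hom_R(-,Y)$ to the sequence from (\textbf{A}) gives the exact sequence $\Hom_R(Z,Y)\to\Hom_R(X,Y)\to\Ext^1_R(X',Y)$.
\item The last term vanishes because $X'\in\X$ and $Y\in\X^\perp$. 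So $g$ extends along $u$, that is, $g=hu$ for some $h\colon Z\to Y$.
\end{itemize}
Thus $g$ factors through the free module $Z$. This gives $\Ext^1_R(X,\syz Y)=0$, so $\syz Y\in\X^\perp$.

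Together with $R\in\X^\perp$, this shows $\X^\perp$ is closed under syzygies. Proposition \ref{aaa} then yields that $\X$ is coherent.

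The only point needing care is the appeal to (\textbf{ARC}): it requires the Ext-vanishing in all positive degrees against both $Z$ and $R$. Both come directly from $Z\in\X\cap\X^\perp$ together with $\X\subseteq\cm R$ over a Gorenstein ring.
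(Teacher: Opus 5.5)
Your proof is correct and follows the paper's argument. You reduce to closure of $\X^\perp$ under syzygies via Proposition~\ref{aaa}, dimension-shift for $i\ge 2$, and use (\textbf{A}) together with (\textbf{ARC}) to see that the middle term $Z$ is free.

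The only difference is cosmetic and lies in the $\Ext^1$ step. You extend $g\colon X\to Y$ along $u$ to the free module $Z$; this implicitly uses that $Z\to Y$ lifts through the epimorphism $F\to Y$, which you could state explicitly. The paper instead dimension-shifts along $0\to X\to Z\to X'\to 0$, identifying $\Ext^1_R(X,\syz Y)$ with an Ext group from $X'$ into $Y$, which vanishes because $X'\in\X$ and $Y\in\X^\perp$.
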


\begin{proof}
In view of Proposition \ref{aaa}, it suffices to show that $\X^\perp$ is closed under syzygies. Let $M$ be an $R$-module in $\X^\perp$, and take an exact sequence $0\to\syz M\to F\to M\to0$ of $R$-modules with $F$ free. Pick any $R$-module $X$ in $\X$. As $\Ext_R^{>0}(X,R)=0$, we see that $\Ext_R^i(X,\syz M)\cong\Ext_R^{i-1}(X,M)=0$ for all $i>1$. Since $\X$ satisfies (\textbf{A}), there is an exact sequence $0\to X\to Y\to X'\to0$ of $R$-modules with $X'\in\X$ and $Y\in\X\cap\X^\perp$. Then $\Ext_R^{>0}(Y,Y\oplus R)=0$. As $R$ satisfies (\textbf{ARC}), the $R$-module $Y$ is free. It is observed that $\Ext_R^1(X,\syz M)\cong\Ext_R^2(X',M)=0$. Thus $\syz M$ belongs to $\X^\perp$, as desired. 
\end{proof}

Applying the above theorem, we obtain the following sufficient condition for $\X$ to be coherent. 

\begin{cor}
Assume that $R$ is a complete intersection and $\X$ is contained in $\cm R$. Then $\X$ is coherent. 
\end{cor}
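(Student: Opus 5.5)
The plan is to reduce the corollary to Theorem \ref{bbb}. Its hypotheses are of three kinds: $R$ is Gorenstein, $R$ satisfies (\textbf{ARC}), and $\X$ is contained in $\cm R$ and satisfies (\textbf{A}).

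The first two are immediate. A complete intersection is Gorenstein, and complete intersections satisfy (\textbf{ARC}) by Auslander--Ding--Solberg \cite{ADS}, as recalled in Remark \ref{rem}. The containment $\X\subseteq\cm R$ is a hypothesis of the corollary. So the only real task is to verify condition (\textbf{A}) for $\X$.

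To verify (\textbf{A}), take $X\in\X$. Since $R$ is Gorenstein and $X$ is maximal Cohen--Macaulay, $X$ is reflexive and $\Ext^{>0}_R(X^*,R)=0$ for $X^*=\Hom_R(X,R)$. Choose a free cover $F\to X^*\to 0$ and dualize. This gives an exact sequence
$$0\to X\to F^*\to X'\to 0$$
with $F^*$ free and $X'=(\syz X^*)^*$, the cosyzygy of $X$. Then $Y=F^*$ is free, so it lies in $\X$ (as $\X$ is resolving) and in $\X^\perp$ (because $\Ext^{>0}_R(\X,R)=0$, using $\X\subseteq\cm R$ and $R$ Gorenstein). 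It remains to show $X'\in\X$.

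The main obstacle is exactly this step: resolving subcategories need not be closed under cosyzygies, so $X'\in\X$ does not follow formally. The intended tool is the complete intersection hypothesis, via the Dao--Takahashi classification underlying Theorem \ref{3}. Over a complete intersection, resolving subcategories of $\cm R$ are determined by support-type data (support varieties / nonfree loci), which are invariant under syzygies and cosyzygies. Concretely, I would argue that $\X=\res(\X)$ contains every maximal Cohen--Macaulay module $M$ with the same invariant as some object of $\X$. Since $X'$ has the same invariant as $X$ (indeed $\syz X'\cong X$ up to free summands), this gives $X'\in\X$.

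If such a classification statement is not directly available, a fallback is to use Theorem \ref{3} together with the description in \cite{DT15} of resolving subcategories contained in $\cm R$ as those closed under cosyzygies. Either way, once $X'\in\X$ is established, (\textbf{A}) holds and Theorem \ref{bbb} shows that $\X$ is coherent.
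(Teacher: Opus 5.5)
Your proposal is correct and follows the paper's proof: reduce to Theorem \ref{bbb} and verify (\textbf{A}) using the fact that, over a complete intersection, a resolving subcategory contained in $\cm R$ is closed under cosyzygies. The paper gets this fact directly from \cite[Corollary 4.16]{DT14}, which already supplies the sequence $0\to X\to F\to X'\to 0$ with $F$ free and $X'\in\X$; so your explicit dualization is optional, Theorem \ref{3} plays no role, and you should cite that corollary rather than the vaguer support-variety or \cite{DT15} fallback.
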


\begin{proof}
If $R$ is a complete intersection, then $R$ is Gorenstein and satisfies (\textbf{ARC}). Since $\X$ is contained in $\cm R$, it follows from \cite[Corollary 4.16]{DT14} that every $R$-module $X\in\X$ admits an exact sequence $0\to X\to F\to X'\to0$ of $R$-modules such that $F$ is free and $X'$ is in $\X$. Since any free $R$-module belongs to $\X\cap\X^\perp$, the resolving subcategory $\X$ satisfies the condition (\textbf{A}). Now Theorem \ref{bbb} implies that $\X$ is coherent.
\end{proof}

\section*{Acknowledgments}
The author would like to thank his supervisor Ryo Takahashi for his careful reading of this manuscript and many thoughtful suggestions. He also thanks Kaito Kimura, Shinnosuke Kosaka, Yuki Mifune, and Yuya Otake for their valuable comments.

\bibliographystyle{abbrv}
\bibliography{ForCIRS}
\end{document}